\crefname{section}{§}{§§}
\Crefname{section}{§}{§§}
\newcommand\be{\begin{equation}}
\newcommand\ee{\end{equation}}
\newcommand\bea{\begin{eqnarray}}
\newcommand\eea{\end{eqnarray}}
\newcommand\bi{\begin{itemize}}
	\newcommand\ei{\end{itemize}}
\newcommand\ben{\begin{enumerate}}
	\newcommand\een{\end{enumerate}}
\newtheorem{theorem}{Theorem}[section]
\newtheorem{corollary}[theorem]{Corollary}
\newtheorem{lemma}[theorem]{Lemma}
\newtheorem{proposition}[theorem]{Proposition}
\newtheorem{definition}[theorem]{Definition}
\newtheorem{remark}[theorem]{Remark}
\renewenvironment{proof}[1][\proofname]{\par%
	\pushQED{\qed}%
	\normalfont \topsep6\p@\@plus6\p@\relax%
	\trivlist%
	\item[\hskip\labelsep%
	#1]\ignorespaces%
}{%
	\popQED\endtrivlist\@endpefalse%
}
\newcommand{\Z}{\ensuremath{\mathbb{Z}}}
\newcommand{\Q}{\mathbb{Q}}
\newcommand{\N}{\mathbb{N}}
\newcommand{\F}{\mathbb{F}}
\newcommand{\cO}{\mathcal{O}}
\newcommand{\Qp}{\mathbb{Q}_p}
\newcommand{\GL}{\text{GL}}
\newcommand{\Tor}{\text{Tor}}
\newcommand{\ind}{\text{ind}}
\newcommand{\mtwo}[4]{\begin{pmatrix}
	#1&#2\\#3&#4
\end{pmatrix}}
\newcommand{\Uni}[1]{\begin{pmatrix}
	1&#1\\&1
\end{pmatrix}}
\numberwithin{equation}{section}
\begin{document}
\title{A note on presentations of supersingular representations of $\GL_2(F)$}
\author{Zhixiang Wu}
\address{Laboratoire de Mathématiques d’Orsay, Unsiv.  Paris-Sud, Université Paris-Saclay, 91405 Orsay, France}
\email{zhixiang.wu@math.u-psud.fr}
%\subjclass[2010]{Primary: 05C??. Secondary: 05C??}
%\pagestyle{empty} 
%\setcounter{secnumdepth}{2}
%\setcounter{tocdepth}{1}
\maketitle
\begin{abstract}
We prove that any smooth irreducible supersingular representation with central character of $\GL_2(F)$ is never of finite presentation when $F$ is a finite field extension of $\Q_p$ such that $F\neq \Qp$, extending a result of Schraen in \cite{schraen2015presentation} for quadratic extensions.
\end{abstract}
\section{Introduction}
Let $p$ be a prime number. Let $F$ be a finite extension of $\Qp$ with ring of integers $\cO$. Let $n\geq 2$ be an integer. Recent years, several progresses have been made on the study of representations of $p$-adic Lie groups on vector spaces over fields of characteristic $p$,  motivated by the $p$-adic and mod-$p$ Langlands programs. The classifications of mod-$p$ irreducible admissible smooth representations of $\GL_n(F)$ in terms of supersingular representations was proved by Barthel-Livn\'e for $\GL_2$ (\cite{barthel1994}) and by Herzig for general $\GL_n$ (\cite{herzig2011gln}), which are now known for general reductive groups (\cite{abe2014reductive}). Supersingular representations of $\GL_2(\Qp)$ was classified by Breuil and some mod-$p$ Langlands correspondences appeared (\cite{Breuil2003gl2}). 
Up to now, except $\GL_2(\Qp)$ and some related groups such as $\text{SL}_2(\Qp)$ (\cite{abdellatif2011autour},\cite{cheng2013mod},\cite{koziol2016classification}), supersingular representations for general groups (e.g. $\GL_3(\Qp)$ or $\GL_2(F)$ when $F\neq \Qp$) remain mysterious. Some complexity of classifications of supersingular representations of $\GL_2(F)$ when $F\neq \Qp$ was shown by Breuil-Pa\v sk\=unas's construction of supersingular representations (\cite{breuil2012supersingular}). Daniel Le also constructed some non-admissible irreducible smooth mod-$p$ representations for certain $\GL_2(F)$ (\cite{le2018some}).\par
Let $G=\GL_2(F), K=\GL_2(\cO)$ and $Z$ be the center of $G$. Let $\pi$ be an irreducible smooth representation of $G$ over an algebraically closed characteristic $p$ field $k$ with central character. Then $\pi$ contains a smooth irreducible sub-representation $\sigma$ of subgroup $KZ$ and there is a surjective morphism of $G$-representations $\text{ind}_{KZ}^G\sigma\twoheadrightarrow \pi $ by the Frobenius reciprocity
where $\text{ind}_{KZ}^G\sigma$ denotes the compact induced representation. The representation $\pi$ is called of finite presentation if the kernel of the surjection $\text{ind}_{KZ}^G\sigma\twoheadrightarrow \pi $ is finitely generated as a $k[G]$-module. Such kind of finite presentation of representations of $G$ when $G=\GL_2(\Qp)$ are used by Colmez to construct a functor to get étale $(\varphi,\Gamma)$-modules from representations of $\text{GL}_2(\Qp)$, which plays a key role in mod-$p$ and $p$-adic Langlands correspondences for $\GL_2(\Qp)$ (\cite{colmez2010gl2}). Vign\'eras constructed a generalized functor from representations of $\GL_2(F)$ of finite presentation to étale $(\varphi,\Gamma)$-modules of finite type (\cite{Vigneras2011colmez}). Unfortunately, Schraen proved in \cite{schraen2015presentation} that any smooth irreducible supersingular representation with central character of $\GL_2(F)$ is never of finite presentation when $F$ is a quadratic field extension of $\Qp$. The proof relies on a kind of coherent rings found by Emerton (\cite{emerton2008class}) and a criterion of finite presentation for representations of $\GL_2$ by Hu (Theorem 1.3, \cite{hu2012diagrammes}). In the note, we extend the result for any finite field extension $F$ of $\Qp$ such that $F\neq \Qp$. 
\begin{theorem}[\ref{maintheorem}]
	If $[F:\Qp]\geq 2$, a smooth supersingular representation of $\GL_2(F)$ with central character is not of finite presentation. 
\end{theorem}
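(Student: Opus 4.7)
The plan is to extend the argument of Schraen in \cite{schraen2015presentation}, which handles the case $[F:\Qp]=2$, to all finite extensions with $[F:\Qp]\geq 2$. Schraen's proof combines two ingredients that remain available for arbitrary $F$: Hu's criterion (Theorem~1.3 of \cite{hu2012diagrammes}), which reinterprets finite presentation of $\pi$ as a finite generation statement for a module attached to $\pi^{I_1}$ over the completed group algebra $\Lambda := k[[N_0]]$ of the pro-$p$ unipotent subgroup $N_0 := I_1 \cap N$ (where $I_1$ is the pro-$p$-Iwahori and $N$ the upper unipotent radical), and Emerton's theorem \cite{emerton2008class} that $\Lambda$ is a coherent ring. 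Both of these are structural and should apply verbatim once ``quadratic'' is replaced by ``degree $d\geq 2$''.

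The steps I would carry out are as follows. First, assume for contradiction that $\pi$ is finitely presented; then Hu's criterion produces a finitely generated $\Lambda$-module $M$ built from $\pi^{I_1}$, equipped with a compatible action of the monoid element $\varphi = \mathrm{diag}(p,1)$ coming from the Hecke action at the uniformizer. Second, use supersingularity of $\pi$ — which forces the relevant Hecke operator to be nilpotent up to twist by the central character — to produce a morphism of $\Lambda$-modules whose source or cokernel is controlled by a distinguished finite-length quotient (essentially a twist of the trivial $\Lambda$-module $k$). Third, apply Emerton's coherence to ensure that kernels and cokernels within the category of finitely presented $\Lambda$-modules stay finitely presented, and then compute a dimension- or grade-type invariant of this distinguished module that obstructs finite generation. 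The contradiction arises because the invariant grows with $d$, while finite presentation would keep it bounded.

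The main obstacle is the third step. Schraen's quadratic argument exploits two-dimensional features of $\Lambda$ (dimension $d=2$ coherent rings are particularly rigid, with a clean notion of Matlis duality and a transparent codimension filtration) that do not literally carry over when $d\geq 3$. The technical heart of the extension will therefore be to identify an invariant of finitely presented $\Lambda$-modules — for instance, the grade of the annihilator of the image of a $K$-socle generator in $M$, or a comparison with a Koszul-type resolution of a finite-length quotient — that still forces a contradiction uniformly in $d$. A natural strategy is to restrict $M$ to a well-chosen closed subgroup $N_0' \subset N_0$ whose Iwasawa algebra is coherent of small dimension (e.g.\ isomorphic to $k[[X,Y]]$), reduce the problem to Schraen's setting over this smaller coherent ring, and check that supersingularity survives the restriction. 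Once such an invariant is in place and the explicit Hecke-module structure coming from supersingularity is shown to violate its boundedness, the argument runs along the template of \cite{schraen2015presentation}, yielding the theorem for all $[F:\Qp]\geq 2$ independently of the chosen $\pi$.
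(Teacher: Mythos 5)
Your high-level frame (Hu's criterion plus Emerton's coherence, run along Schraen's template) is the right one, but the proposal leaves the actual content of the extension unproved, and it misplaces where the difficulty lies. The coherent-ring and homological part of Schraen's argument is \emph{not} what breaks for $d\geq 3$: the Iwasawa algebra $A=k[[U]]\simeq k[[X_1,\dots,X_d]]$ and the twisted ring $A[X]_{\phi}$ are coherent for every $d$ (Emerton), and the structural lemmas used here (Schraen's Lemma 1.12, Lemma 1.7, Corollary 1.11, Lemma 2.11) are already dimension-independent; the paper simply reruns them with $\Tor_d^A$ in top degree, obtaining Proposition \ref{proptord0L} and Lemmas \ref{lammatech1} and \ref{h0>0} -- no Matlis duality or codimension filtration special to $d=2$ is involved, and no restriction to a smaller subgroup is made. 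The actual chain is: if $\pi$ were finitely presented, Hu's theorem gives that $I^+(\pi,\sigma)^U$ is finite-dimensional, hence the $A[X]_{\phi_2}$-module $M(\pi,\sigma)$ generated by $\sigma$ is admissible and finitely presented; then the torsion analysis of $\Tor_d^A(k,L(\sigma))$ combined with Lemma \ref{h0>0} forces the kernel of $L(\sigma)\to M(\pi,\sigma)$ to vanish, so $L(\sigma)$ itself would be admissible.

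The genuine gap in your proposal is therefore the one new input the paper has to supply: Theorem \ref{theoremnonadmissiblility}, the non-admissibility of $L(\sigma)=I^e_{\geq 0}(\sigma)/T(I^o_{\geq 1}(\sigma))$ for every $F\neq\Qp$. This is exactly where the quadratic hypothesis entered in Schraen's work, and it is not a soft or formal statement: the paper proves it by exhibiting, case by case (ramified with $e\geq 2$; unramified with $\vec r\neq(p-1,\dots,p-1)$; unramified with $\vec r=(p-1,\dots,p-1)$), an explicit $g\in R_2(\sigma)$ with $g\notin T_+R_1(\sigma)$ and $ug-g\in T(R_1'(\sigma))$ for all $u\in U$ (Lemma \ref{mainlem}), using Teichm\"uller--Witt identities (Lemma \ref{lemmawitt}) and Lemma \ref{lemmasumfq}, together with the observation that $\dim_k L(\sigma)^U\geq 2$ already forces $L(\sigma)^U$ infinite-dimensional because the $k[X]$-torsion part of $\Tor_d^A(k,L(\sigma))$ is only one-dimensional. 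Your proposal contains no substitute for this computation, and the fallback you suggest -- restricting to a closed subgroup $N_0'\subset N_0$ whose Iwasawa algebra is two-dimensional so as to ``reduce to Schraen's setting'' -- does not close the argument: finite-dimensionality of invariants over a smaller subgroup neither implies nor is implied by the statement needed over $U$, the subgroup would have to be stable under conjugation by $\alpha$ to retain the $\phi$-module structure, and supersingularity of $\pi$ does not restrict to anything resembling a supersingular representation over a smaller field. Likewise, a grade- or Koszul-type invariant ``growing with $d$'' plays no role in the correct proof; what is needed, and what is missing, is the explicit construction of extra $U$-invariants in the universal module.
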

The proof firstly follows and simplifies the original arguments in \cite{schraen2015presentation}. Let $\ind_{KZ}^G\sigma/T(\ind_{KZ}^G\sigma)$ be the universal supersingular representation of $G$ where $T$ is the distinguished Hecke operator (cf. \cite{barthel1994}). Let $L(\sigma)$ be the subspace of $\ind_{KZ}^G\sigma/T(\ind_{KZ}^G\sigma)$ generated by $\sigma$ under the action of monoid $\mtwo{\varpi^{2\N}}{\cO}{}{1}$, where $\varpi$ is a uniformizer of $\cO$. Let $U:=\mtwo{1}{\cO}{}{1}$ be the subgroup of unipotent upper-triangular matrices in $\GL_2(\cO)$. Using some arguments on modules over coherent rings (Lemma \ref{lammatech1} and Lemma \ref{h0>0}), we prove that $\pi$ is not of finite presentation if the sub-module $L(\sigma)$ is not admissible, which means that the space $L(\sigma)^{U}$ of the $U$-invariants in $L(\sigma)$ is infinite-dimensional over $k$. The non-admissibility of $L(\sigma)$ is proved by explicitly finding invariant elements which is similar to works in \cite{Breuil2003gl2}, \cite{schein2011irreducibility}, \cite{morra2012some} and \cite{hendel2019universal}. A key observation is that the module structure of $L(\sigma)$ over the coherent ring guarantees that $\dim_k L(\sigma)^{U}=\infty$ if $\dim_k L(\sigma)^{U}\geq 2$. As a corollary, following \cite{emerton2008class} and \cite{schraen2015presentation}, our result gives a uniform proof for the following fact.
\begin{corollary}[\ref{maincor}]
	For any smooth irreducible representation $\sigma$ of $KZ$, the universal supersingular representation $\ind_{KZ}^G\sigma/T(\ind_{KZ}^G\sigma)$ of $\GL_2(F)$ is not admissible if $F\neq \Qp$.
\end{corollary}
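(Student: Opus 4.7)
The plan is to derive the corollary as a byproduct of the proof of Theorem \ref{maintheorem}. There, one constructs the submodule $L(\sigma) \subseteq V := \ind_{KZ}^G\sigma/T(\ind_{KZ}^G\sigma)$ and exhibits, by explicit computation in the vein of \cite{Breuil2003gl2, schein2011irreducibility, morra2012some, hendel2019universal}, infinitely many linearly independent $U$-invariant vectors in $L(\sigma)$. Since the inclusion $L(\sigma) \hookrightarrow V$ is $U$-equivariant, one immediately obtains $\dim_k V^U = \infty$.

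To convert $\dim_k V^U = \infty$ into non-admissibility of $V$ as a smooth $G$-representation, I would argue that the invariants produced all lie in $V^{K_1}$ for some fixed compact open subgroup $K_1$ of $G$ containing $U$. This should follow from the structure of the construction: $\sigma$ is smooth irreducible of $KZ$, hence factors through $KZ$ modulo some principal congruence subgroup $(1+\varpi^r M_2(\cO))Z$, and the Hecke operator $T$ together with the monoid $\mtwo{\varpi^{2\N}}{\cO}{}{1}$ generating $L(\sigma)$ are compatible with a common level, so the explicit $U$-invariants admit a uniform lower bound on their stabilizers. The compact open subgroup $K_1$ furnished by this common stabilizer then satisfies $\dim_k V^{K_1} = \infty$, contradicting admissibility.

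The only point requiring verification is that the construction of infinitely many $U$-invariants in $L(\sigma)$ applies to \emph{any} smooth irreducible representation $\sigma$ of $KZ$, not merely to those arising as a $KZ$-subrepresentation of some supersingular $\pi$. I expect this to be transparent: the definition of $L(\sigma)$, the action of the monoid, and the explicit recipe for producing $U$-invariants depend only on $\sigma|_{KZ}$ and on the Hecke operator $T$ acting on $\ind_{KZ}^G\sigma$, with no reference to a supersingular $\pi$. Thus the main obstacle is essentially absent, and the corollary is a clean packaging of the non-admissibility input already established during the proof of Theorem \ref{maintheorem}, following and unifying the cases handled in \cite{emerton2008class} and \cite{schraen2015presentation}.
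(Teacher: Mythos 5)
There is a genuine gap, in two places. First, you treat $L(\sigma)$ as a subrepresentation of $V:=\ind_{KZ}^G\sigma/T(\ind_{KZ}^G\sigma)$. But $L(\sigma)=I^e_{\geq 0}(\sigma)/T(I^o_{\geq 1}(\sigma))$ is a quotient of a subspace of $\ind_{KZ}^G\sigma$, and the induced map $L(\sigma)\rightarrow V$ has kernel $\bigl(T(\ind_{KZ}^G\sigma)\cap I^e_{\geq 0}(\sigma)\bigr)/T(I^o_{\geq 1}(\sigma))$, whose vanishing is nowhere established in the note; in the proof of Theorem \ref{maintheorem} the analogous kernel $N(\pi,\sigma)$ is shown to vanish only under the finite-presentation hypothesis, so you cannot simply quote an ``inclusion'' $L(\sigma)\hookrightarrow V$. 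Second, and more seriously, even granting $\dim_k V^U=\infty$, this by itself does not contradict admissibility of $V$ as a $G$-representation: $U$ is compact but not open in $G$, and a smooth admissible (even finitely presented) representation can have infinite-dimensional $U$-invariants (principal series already do) --- this is exactly why Hu's Theorem \ref{thmhu} and the notion of admissible $A$-module are formulated for the monoid-generated submodule rather than for the whole representation. Your proposed repair, that all the invariants lie in $V^{K_1}$ for one fixed compact open subgroup because of a ``uniform lower bound on their stabilizers'', fails: the infinitude of $L(\sigma)^U$ is not produced by an explicit infinite family at bounded level, but by the torsion-free part of the $k[X]$-module $\Tor_d^A(k,L(\sigma))$, i.e.\ the extra invariants are essentially the classes of $X^n g=\alpha^{2n}g$, supported at vertices of depth $2n+2$ in the tree, and their stabilizers shrink as $n\rightarrow\infty$; there is no common level.

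The paper's own proof supplies precisely the missing bridge by citing the argument of Corollary 2.20 of \cite{schraen2015presentation}, which rests on Proposition 4.5 of \cite{emerton2008class}: that proposition is what allows one to pass from admissibility of $\ind_{KZ}^G\sigma/T(\ind_{KZ}^G\sigma)$ as a smooth $G$-representation to admissibility of the relevant $A[X]_{\phi_2}$-module, after which Theorem \ref{theoremnonadmissiblility} gives the contradiction. Your observation that the non-admissibility of $L(\sigma)$ requires no supersingular $\pi$ and works for every smooth irreducible $\sigma$ is correct and is indeed why the corollary holds in this generality, but the heart of the corollary is the transfer statement between $G$-admissibility and $A$-module admissibility, and your outline replaces it with an argument that does not work.
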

\textit{Organization of the note}. In \cref{secrepresentationgl2}, we recall basic facts on mod-$p$ representations of $\GL_2(F)$ and Emerton's coherent rings. We prove the main result in \cref{secmain} with the proof for non-admissibility postponed to \cref{secnonadmissible}. 
\par
\textit{Notations}. We fix a uniformizer $\varpi$ of $F$. Let $k_F$ be the residue field of $\cO$. Let $d=[F:\Qp]$, $f=[k_F:\F_p]$, $e=d/f$ and $q=p^f$. Let $G=\GL_2(F)$,  $K=\GL_2(\cO)$ and $Z$ be the center of $G$. Let $K_1$ be the kernel of the reduction map $K\rightarrow \GL_2(k_F)$. Let $U=\left\{\mtwo{1}{a}{}{1},a\in \cO\right\}$ and $\alpha=\mtwo{\varpi}{}{}{1}$. Let $k$ be an algebraically closed field of characteristic $p$. We identify $k_F=\F_q$ and fix an embedding $k_F\hookrightarrow k$. All the representations in the note are on vector spaces over $k$.\par
\textit{Acknowledgement}. The author would like to express his sincere gratitude to his advisor Prof. Benjamin Schraen for suggesting the problem and for helpful discussions. The author would like to thank the anonymous referees for their comments and suggestions. The author thanks the Fondation Math\'ematique Jacques Hadamard (FMJH) and University of Paris-Sud for support.
\section{Preliminary on representations and coherent rings}\label{secrepresentationgl2}
\subsection*{Mod-$p$ representations of $\GL_2$}
We recall some results and notations in \cite{barthel1994} and \cite{Breuil2003gl2}.
Let $\pi$ be a smooth irreducible representation of $G$ with central character over $k$. Then $\pi$ contains an irreducible sub-$KZ$-representation $\sigma$ of $KZ$. Let $\ind_{KZ}^G\sigma$ be the compactly induced representation: the representation space consists of functions $f:G\rightarrow \sigma$ such that $f$ is compactly supported modulo $KZ$ and $f(k\cdot)=k.f(\cdot)$ for any $k\in KZ$ and the action of $G$ is given by right translations. There is a distinguished element $T\in\text{End}_G(\ind_{KZ}^G\sigma)$ which generates the Hecke algebra. By the definition and the classification in \cite{barthel1994}, $\pi$ is supersingular if and only if there exists a surjection $\ind_{KZ}^G\sigma\twoheadrightarrow \pi$ induced by an inclusion $\sigma\hookrightarrow \pi|_{KZ}$ and the Frobenius reciprocity such that the surjection factors through a map 
$$\ind_{KZ}^G\sigma/T(\ind_{KZ}^G\sigma) \twoheadrightarrow \pi$$
for some or every such $\sigma$.
\par 
If $0\leq r\leq p-1$ is an integer, let $\text{Sym}^r$ be the $r$-th symmetric power of the standard representation of $\GL_2(\F_q)$ on two-dimensional space $k^{2}$ via the embedding $\F_q\hookrightarrow k$. If $\vec{r}=(r_0,\cdots,r_{f-1})\in \Z^f$ with $0\leq r_j\leq p-1$ for any $0\leq j\leq f-1$,
we get a representation $\text{Sym}^{\vec{r}}:=\otimes_{j=0}^{f-1}\text{Sym}^{r_j}\circ \text{Fr}^j$, where $\text{Fr}$ denotes the automorphism of $\GL_2(\F_q)$ induced by the Frobenius automorphism of $\F_q$. If $\vec{a},\vec{b}\in \Z^f$, we say $\vec{a}\leq \vec{b}$ if $a_j\leq b_j$ for any $j=0,\cdots, f-1$. The representation $\text{Sym}^{\vec{r}}$ has a model consisting of homogeneous polynomials spanned by a basis $\{\otimes_{j=0}^{f-1}x_j^{r_j-i_j}y_j^{i_j}\}_{0\leq \vec{i}\leq \vec{r}}$. The group action is given by $$\mtwo{a}{b}{c}{d}.\otimes_{j=0}^{f-1}x_j^{r_j-i_j}y_j^{i_j}=\otimes_{j=0}^{f-1}(a^{p^j}x_j+c^{p^j}y_j)^{r_j-i_j}(b^{p^j}x_j+d^{p^j}y_j)^{i_j},$$
for any $0 \leq \vec{i}\leq \vec{r},  \mtwo{a}{b}{c}{d}\in \GL_2(\F_q).$ We abbreviate $x^{\vec{r}-\vec{i}}y^{\vec{i}}:=\otimes_{j=0}^{f-1}x_j^{r_j-i_j}y_j^{i_j}$. If $\chi:\F_q^{\times}\rightarrow k^{\times}$ is a character of $\F_q^{\times}$, $\chi\circ \text{det}$ is a character of $\GL_2(\F_q)$. 
We can naturally inflate the representation $(\chi\circ \text{det})\otimes\text{Sym}^{\vec{r}}$ of $\GL_2(\F_q)$ to a representation of $K$ by letting $K_1$ act trivially. Then the smooth irreducible $KZ$-representation $\sigma$ is isomorphic to $(\chi\circ \text{det})\otimes\text{Sym}^{\vec{r}}$ when restricted to $K$ for a unique $\chi:\F_q^{\times}\rightarrow k^{\times}$ and $\vec{r}$ as above and the action of $\mtwo{\varpi}{}{}{\varpi}\in Z$ on $\sigma$ is given by a scalar $\nu \in k^{\times}$.\par
If $g\in G, w\in \sigma$, let $[g,w]\in \ind_{KZ}^G\sigma$ be the element given by  
$$[g,w](g')=\left\{\begin{aligned}
	g'g.w & &\text{if } g'g\in KZ,\\
	0 & &\text{if  } g'g\notin KZ.
\end{aligned}
\right.	$$
Then $g'.[g,w]=[g'g,w], \forall g',g\in G, w\in\sigma$. If $S\subset G$ is a subset, let $[S,\sigma]$ be the subspace of $\ind_{KZ}^G\sigma$ spanned by $[g,w],w\in \sigma, g\in S$. \par
If $\lambda\in \mathbb{F}_q$, we let $[\lambda]$ be the Teichmüller lift of $\lambda$ in $F$. For any integer $n\geq 1$, the set  $I_n:=\{[\lambda_0]+\varpi[\lambda_1]+\cdots +\varpi^{n-1}[\lambda_{n-1}],\lambda_i\in \mathbb{F}_q\} $ is a complete set of representatives of $\cO/\varpi^n\cO$. We define $I_0=\{0\}$. If $\lambda=[\lambda_0]+\varpi[\lambda_1]+\cdots +\varpi^{n-1}[\lambda_{n-1}]\in I_n$, let  $[\lambda]_{n-1}:=\lambda-\varpi^{n-1}[\lambda_{n-1}]\in I_{n-1}$. If $\vec{i}\in\Z^f,\lambda\in \F_q$, we use the notation $\lambda^{\vec{i}}:=\lambda^{\sum_{0\leq j\leq f-1}p^ji_j}$. The element $\left[\mtwo{\varpi^n}{\lambda}{}{1},\sum_{\vec{0} \leq \vec{i}\leq\vec{r}}u_{\vec{i}}x^{\vec{r}-\vec{i}}y^{\vec{i}}\right]\in \ind_{KZ}^G\sigma$ where $u_{\vec{i}}\in k$ for any $\vec{i}, n\in \N,\lambda\in I_n$. The action of the operator $T$ on the element is calculated as in \cite{Breuil2003gl2} (or see Proposition 2.1, \cite{hendel2019universal}). If $n\geq 1, \mu\in I_n$,
\begin{equation}\label{equationofT}
\begin{aligned}
	T\left(\left[\mtwo{\varpi^n}{\mu}{}{1}, \sum_{0\leq\vec{i}\leq \vec{r}} u_{\vec{i}}x^{\vec{r}-\vec{i}}y^{\vec{i}}\right]\right)=& \sum_{\lambda \in \F_q}\left[\mtwo{\varpi^{n+1}}{\mu+\varpi^n[\lambda]}{}{1}, (\sum_{0\leq\vec{i}\leq \vec{r}}u_{\vec{i}}(-\lambda)^{\vec{i}})x^{\vec{r}}\right]\\
&+\nu\left[\mtwo{\varpi^{n-1}}{[\mu]_{n-1}}{}{1}, u_{\vec{r}}\otimes_{j=0}^{f-1} (\mu_{n-1}^{p^j}x_j+y_j)^{r_j}\right],\\
T\left(\left[\mtwo{1}{}{}{1}, \sum_{0\leq\vec{i}\leq \vec{r}} u_{\vec{i}}x^{\vec{r}-\vec{i}}y^{\vec{i}}\right]\right)=& \sum_{\lambda \in \F_q}\left[\mtwo{\varpi}{[\lambda]}{}{1}, (\sum_{0\leq\vec{i}\leq \vec{r}}u_{\vec{i}}(-\lambda)^{\vec{i}})x^{\vec{r}}\right]\\
&+\left[\mtwo{1}{}{}{\varpi}, u_{\vec{r}}y^{\vec{r}}\right].
\end{aligned}
\end{equation}
\subsection*{A class of coherent rings}
We now recall some results in \cite{emerton2008class} and \cite{schraen2015presentation} on a type of coherent rings and their applications on representations of $\GL_2$. Assume $A$ is a complete regular local ring of dimension $d$ with residue field $k$ and maximal ideal $\mathfrak{m}$. Assume $\phi:A\rightarrow A$ is a local flat ring endomorphism of $A$ and assume $\phi$ is equal to the identity map on $k$ after reduction modulo $\mathfrak{m}$. We let $A[X]_{\phi}$ be the ring of polynomials in variable $X$ with commutative relation $Xa=\phi(a)X, \forall a\in A$. By Proposition 1.3 in \cite{emerton2008class}, $A[X]_{\phi}$ is a coherent ring which means that any finitely generated submodule of a finitely presented left $A[X]_{\phi}$-module is finitely presented.\par 
Modulo $\mathfrak{m}$, we get a ring morphism $A[X]_{\phi}\rightarrow k[X]$. If $M$ is a left  $A[X]_{\phi}$-module, there are natural isomorphisms $\Tor^{A[X]_{\phi}}_{i}(k[X],M)\simeq \Tor^A_{i}(k,M)$ for all $i\geq 0$ (Lemma 2.1, \cite{emerton2008class}). The isomorphisms equip the $k$-spaces $\Tor^A_{i}(k,M)$ $k[X]$-module structures. If $M$ is a finitely presented $A[X]_{\phi}$-module, then for any $i\geq 0$, $\Tor_i^A(k,M)$ is a finitely generated $k[X]$-module (Proposition 2.2, \cite{emerton2008class}).\par
An $A$-module is called smooth if any finitely generated submodule is Artinian. An $A[X]_{\phi}$-module is called smooth if the underlying $A$-module is smooth. 
If $M$ is an $A$-module, we let $M[\mathfrak{m}]=\{x\in M\mid mx=0,\forall m\in\mathfrak{m}\}$. There is a non-canonical isomorphism between functor $M\mapsto M[\mathfrak{m}]$ and functor $M\mapsto \Tor_d^A(k,M)$. An $A$-module $M$ is called admissible if it is smooth and $M[\mathfrak{m}]\simeq\Tor_d^A(k,M)$ is finite-dimensional over $k$. An $A[X]_{\phi}$-module is called admissible if the underlying $A$-module is admissible. \par

From now on, we let $A:=k[[U]]=\varprojlim k[U/N]$, where $N$ ranges over all open normal subgroups of $U$, be the Iwasawa algebra of $U$. Then $A\simeq k[[X_1,\cdots,X_d]]$ the ring of formal power series in $d$ variables with maximal ideal $\mathfrak{m}=(X_1,\cdots,X_d)$. The action of $\alpha=\mtwo{\varpi}{}{}{1}$ on $U:u\mapsto \alpha u\alpha^{-1}$ induces a flat local morphism $\phi:A\rightarrow A$. If $\Pi$ is a smooth representation of $U$, $\Pi$ is naturally a smooth $A$-module and $\Pi^{U}=\Pi[\mathfrak{m}]$. Thus the representation $\Pi$ is an admissible $A$-module if and only if $\Pi$ is an admissible $U$-representation. Any representation $\Pi$ of monoid $\mtwo{\varpi^{\N}}{\cO}{}{1}$ is now naturally an $A[X]_{\phi}$-module where $X$ acts by the action of $\alpha$ on $\Pi$.\par
Let $\sigma$ be an irreducible smooth representation of $KZ$. For any $n\geq 0$, let $R_{n}(\sigma):=\left[\mtwo{\varpi^{n}}{\cO}{}{1},\sigma\right]$ which is a sub-$A$-module of $\ind_{KZ}^G\sigma$. For any $k\in\N$, we let 
\begin{align*} I_{\geq k}(\sigma)&:=\bigoplus_{n \geq k}R_n({\sigma}),\quad
	I_{\geq k}^e(\sigma):=\bigoplus_{n \geq k,2\mid n}R_n({\sigma}),\quad
	I_{\geq k}^o(\sigma):=\bigoplus_{n \geq k,2\nmid n}R_n({\sigma}),
\end{align*}
be subspaces of $\ind_{KZ}^G\sigma$. We let $\phi_2:=\phi^2:A\rightarrow A$.
We have (Lemma 2.11, \cite{schraen2015presentation})
\begin{align*}
I_{\geq 0}^e(\sigma)\simeq A[X]_{\phi_2}\otimes_{A}\sigma,\quad
I_{\geq 1}^o(\sigma)\simeq A[X]_{\phi_2}\otimes_{A}R_1(\sigma)
\end{align*}
as $A[X]_{\phi_2}$-modules. \par
By the formula of the operator $T$ (\ref{equationofT}), we have $T(R_{n}(\sigma))\subset R_{n+1}(\sigma)\oplus R_{n-1}(\sigma)$ if $n\geq 1$. Hence $T(I_{\geq 1}(\sigma))\subset I_{\geq 0}(\sigma)$ and $T(I_{\geq 1}^o(\sigma))\subset I_{\geq 0}^e(\sigma)$, etc. We decompose $T|_{I_{\geq 1}(\sigma)}=T_++ T_-$ by the decomposition $T|_{R_n(\sigma)}=T_+|_{R_n(\sigma)}+ T_-|_{R_n(\sigma)}$, where $T_+|_{R_n(\sigma)}:R_n(\sigma)\rightarrow R_{n+1}(\sigma)$ and $T_-|_{R_n(\sigma)}:R_n(\sigma)\rightarrow R_{n-1}(\sigma)$ are compositions of the projections to the direct sum factors of $R_{n+1}(\sigma)\oplus R_{n-1}(\sigma)$ and $T|_{R_n(\sigma)}$, for all $n\geq 1$. \par
Let $L(\sigma):=I_{\geq 0}^e(\sigma)/T(I_{\geq 1}^o(\sigma))$. Then $L(\sigma)$ is an $A[X]_{\phi_2}$-module. The following proposition is essentially Proposition 2.23 in \cite{schraen2015presentation} which we recall the proof.
\begin{proposition}\label{proptord0L}
	$\Tor_0^A(k,L(\sigma))=0$. The $k[X]$-torsion part of $\Tor_d^A(k,L(\sigma))$ is isomorphic to $k=k[X]/(X)$ and coincides with the image of $\Tor_d^A(k,\sigma)$ via the morphism $\sigma\hookrightarrow I_{\geq 0}^e(\sigma)\twoheadrightarrow L(\sigma)$.
\end{proposition}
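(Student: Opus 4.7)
The plan is to apply the long exact sequence of $\Tor^A(k,-)$ to the short exact sequence of $A[X]_{\phi_2}$-modules
\[
0 \to I_{\geq 1}^o(\sigma) \xrightarrow{T} I_{\geq 0}^e(\sigma) \to L(\sigma) \to 0,
\]
where injectivity of $T$ on $\ind_{KZ}^G\sigma$ is classical. Combining Lemma 2.10 of \cite{schraen2015presentation} with the flatness of $A[X]_{\phi_2}$ over $A$ yields $\Tor_i^A(k, I_{\geq 0}^e(\sigma)) \simeq k[X] \otimes_k \Tor_i^A(k,\sigma)$ and $\Tor_i^A(k, I_{\geq 1}^o(\sigma)) \simeq k[X] \otimes_k \Tor_i^A(k, R_1(\sigma))$ as $k[X]$-modules.

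For $\Tor_0^A(k,L(\sigma))=0$, I will check that $T_*$ is surjective on $\Tor_0$. Evaluating \eqref{equationofT} on $[\mtwo{\varpi^{2n+1}}{0}{}{1}, y^{\vec r}]$ produces $\nu[\mtwo{\varpi^{2n}}{0}{}{1}, y^{\vec r}]$ from $T_-$, plus the upshift $\sum_{\lambda}(-\lambda)^{\vec r}[\mtwo{\varpi^{2n+2}}{\varpi^{2n+1}[\lambda]}{}{1}, x^{\vec r}]$. Modulo $\mathfrak{m} I_{\geq 0}^e(\sigma)$ the first term represents $\nu X^n\otimes\overline{y^{\vec r}}$ in $k[X]\otimes_k\sigma_U$, while the second vanishes: each summand becomes $\overline{x^{\vec r}}$ in $\sigma_U$ after absorbing the $U$-translate, and either $\overline{x^{\vec r}}=0$ when $\vec r\neq\vec 0$ (since $(u-1)\cdot x^{\vec r-\vec i}y^{\vec i}$ strictly lowers the $y$-degree, so $\mathfrak{m}\sigma$ contains every monomial of $y$-degree less than $\vec r$), or the coefficient $\sum_\lambda 1=q=0$ in $k$ when $\vec r=\vec 0$. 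Since $\overline{y^{\vec r}}$ generates the one-dimensional space $\sigma_U$, varying $n$ yields surjectivity.

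For $\Tor_d^A(k,L(\sigma))$, the long exact sequence produces
\[
0\to \mathrm{coker}(T_{*,d})\to \Tor_d^A(k,L(\sigma))\to \ker(T_{*,d-1})\to 0,
\]
and $\ker(T_{*,d-1})$ embeds into $\Tor_{d-1}^A(k, I_{\geq 1}^o(\sigma))\simeq k[X]\otimes_k\Tor_{d-1}^A(k, R_1(\sigma))$, which is $k[X]$-free, so this kernel is $k[X]$-torsion-free. Hence the $k[X]$-torsion of $\Tor_d^A(k,L(\sigma))$ equals that of $\mathrm{coker}(T_{*,d})$. Using $\Tor_d^A(k,M)\simeq M^U$ and the Frobenius-reciprocity identification $R_n(\sigma)^U \simeq \sigma^U$ sending $V_n := \sum_{a\in I_n}[\mtwo{\varpi^n}{a}{}{1}, x^{\vec r}]$ to $x^{\vec r}$, a direct calculation via \eqref{equationofT} yields $T(V_{2n+1})=V_{2n+2}$ for every $n\geq 0$ (the $T_-$ contribution vanishes because the $y^{\vec r}$-coefficient of $x^{\vec r}$ is zero when $\vec r\neq\vec 0$, while for $\vec r=\vec 0$ the downshift sums to $q\cdot V_{2n}=0$).

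Under the grading-compatible identification $\bigoplus_n R_{2n}(\sigma)^U\simeq k[X]\otimes_k\sigma^U$ with $V_{2n}\leftrightarrow X^n\otimes x^{\vec r}$, the map $T_{*,d}$ becomes multiplication by $X$, whose cokernel is $k[X]/(X)=k$; and $\sigma\hookrightarrow I_{\geq 0}^e(\sigma)\twoheadrightarrow L(\sigma)$ sends $x^{\vec r}$ to $V_0\leftrightarrow 1\otimes x^{\vec r}$, whose class generates this cokernel. The main obstacle is to verify that this grading-compatible identification is indeed $k[X]$-linear for the intrinsic $k[X]$-action on $\Tor_d^A(k, I_{\geq 0}^e(\sigma))$ coming from the $A[X]_{\phi_2}$-structure---the naive action $v\mapsto Xv$ on $I_{\geq 0}^e(\sigma)$ does not preserve $U$-invariants (for instance $X\cdot V_0 = [\mtwo{\varpi^2}{0}{}{1}, x^{\vec r}]$ is only $U_2$-invariant), so one must trace through the Shapiro-type isomorphism of Emerton (Lemma 2.1, \cite{emerton2008class}) to confirm that the intrinsic $X$-action on the Tor genuinely sends $V_0$ to $V_2$.
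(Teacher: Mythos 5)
Your proof follows essentially the same route as the paper: the long exact sequence attached to $0\to I_{\geq 1}^o(\sigma)\xrightarrow{T} I_{\geq 0}^e(\sigma)\to L(\sigma)\to 0$, surjectivity of $\Tor_0^A(T)$ for the vanishing of $\Tor_0^A(k,L(\sigma))$, torsion-freeness of $\Tor_{d-1}^A(k,I_{\geq 1}^o(\sigma))$ to reduce the torsion of $\Tor_d^A(k,L(\sigma))$ to the cokernel of $\Tor_d^A(T)$, and identification of that cokernel with the image of $\Tor_d^A(k,\sigma)$. The only real difference is that where the paper quotes Lemma 2.11 of \cite{schraen2015presentation} (namely $\Tor_0^A(T_+)=0$ with $\Tor_0^A(T_-)$ surjective in each degree, and $\Tor_d^A(T_-)=0$ with $\Tor_d^A(T_+)$ an isomorphism in each degree), you re-derive these facts by explicit computation with $[\alpha^{2n+1},y^{\vec{r}}]$ modulo $\mathfrak{m}$ and with the norm elements $V_n$; these computations are correct. (For the claim $x^{\vec{r}}\in\mathfrak{m}\sigma$ when $\vec{r}\neq\vec{0}$, rather than the slightly loose assertion about all monomials of lower $y$-degree, it suffices to apply $\Uni{[b]}-1$ to $x^{\vec{r}-\vec{i'}}y^{\vec{i'}}$, where $\vec{i'}$ has a single $1$ in a coordinate $j_0$ with $r_{j_0}\geq 1$: this gives $b^{p^{j_0}}x^{\vec{r}}$.)

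The step you flag as the main obstacle is indeed the one thing your write-up does not prove, but it is settled by citation, not by a new argument: the compatibility of the graded identification $\bigoplus_n\Tor_d^A(k,R_{2n}(\sigma))\simeq k[X]\otimes_k\Tor_d^A(k,\sigma)$, $V_{2n}\leftrightarrow X^n\otimes x^{\vec{r}}$, with the intrinsic $k[X]$-action from Emerton's Lemma 2.1 is exactly Proposition 1.3 of \cite{schraen2015presentation} (the isomorphism $\Tor_i^A(k,A[X]_{\phi_2}\otimes_A N)\simeq k[X]\otimes_k\Tor_i^A(k,N)$ is an isomorphism of $k[X]$-modules, with $X$ shifting the degree), and this is what the paper's own proof invokes at the corresponding point. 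With it, the image of $\Tor_d^A(T)$ is the part of degree $\geq 1$, i.e. $X\cdot\Tor_d^A(k,I_{\geq 0}^e(\sigma))$, so the cokernel is $k[X]/(X)$, generated by the class of $V_0$. Note, moreover, that the two clauses actually used later in the paper --- that the torsion part is one-dimensional over $k$ and coincides with the image of $\Tor_d^A(k,\sigma)$ --- already follow from your computation without this compatibility, since the cokernel is finite-dimensional over $k$ and hence automatically $k[X]$-torsion; only the precise statement that the torsion part is $k[X]/(X)$ (i.e. that $X$, and not some $X-c$, annihilates it) requires the $k[X]$-linearity of the identification. Add the reference to Proposition 1.3 of \cite{schraen2015presentation} (or unwind Emerton's Lemma 2.1 directly) and your argument is complete.
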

\begin{proof}
	We have an exact sequence
	\begin{align*}
		0\rightarrow \Tor_d^A(k,I_{\geq 1}^o(\sigma))\stackrel{\Tor_d^A(T)}{\rightarrow} \Tor_d^A(k,I_{\geq 0}^e(\sigma))\rightarrow \Tor_d^A(k,L(\sigma))\rightarrow \Tor_{d-1}^A(k,I_{\geq 1}^o(\sigma)) \cdots\\
			\cdots\rightarrow \Tor_0^A(k,I_{\geq 1}^o(\sigma))\stackrel{\Tor_0^A(T)}{\rightarrow} \Tor_0^A(k,I_{\geq 0}^e(\sigma))\rightarrow \Tor_0^A(k,L(\sigma))\rightarrow 0.
	\end{align*}
	And $\Tor_i^A(k,I_{\geq 0}^e(\sigma))\simeq \oplus_{k\geq 0}\Tor_i^A(k,R_{2k}(\sigma)),\Tor_i^A(k,I_{\geq 1}^o(\sigma))\simeq \oplus_{k\geq 0}\Tor_i^A(k,R_{2k+1}(\sigma))$ for $i\in\N$. \par
	By Lemma 2.12 in \cite{schraen2015presentation}, $\Tor_0^A(T_+)=0$, $\Tor_0^A(T_-)=\Tor_0^A(T)$ and $\Tor_0^A(T_-)$ sends each $\Tor_0^A(k,R_{2k+1}(\sigma))$ onto $\Tor_0^A(k,R_{2k}(\sigma))$. Hence $\Tor_0^A(T)$ in the above diagram is a surjection and $\Tor_0^A(k,L(\sigma))=0$. Since $\Tor_{d-1}^A(k,I_{\geq 1}^o(\sigma))\simeq \Tor_{d-1}^A(k,A[X]_{\phi_2}\otimes_A (A \otimes_{\phi,A}\sigma))\simeq k[X]\otimes_k \Tor_{d-1}^A(k,A \otimes_{\phi,A}\sigma)$ by Proposition 1.4 in \cite{schraen2015presentation}, the $k[X]$-module $\Tor_{d-1}^A(k,I_{\geq 1}^o(\sigma))$ is torsion free. Hence $\Tor_d^A(k,L(\sigma))_{tors}=\text{coker}(\Tor_d^A(T))_{tors}$. By Lemma 2.12 in \cite{schraen2015presentation} again, $\Tor_d^A(T_-)=0$ and $\Tor_d^A(T_+)$ sending $\Tor_d^A(k,R_{2k+1}(\sigma))$ to $\Tor_d^A(k,R_{2k+2}(\sigma))$ is an isomorphism. Thus the image of $\Tor_d^A(T)$ in $\Tor_d^A(k,I_{\geq 0}^e(\sigma))$ is $\oplus_{k\geq 1}\Tor_d^A(k,R_{2k}(\sigma))$. Since $R_0(\sigma)=\sigma$, $\Tor_d^A(k,L(\sigma))_{tors}$ coincides with the image of $\Tor_d^A(k,\sigma)$ via the map $\sigma\hookrightarrow I_{\geq 0}^e(\sigma)\twoheadrightarrow L(\sigma)$ in $L(\sigma)$. Finally, $\Tor_d^A(k,\sigma)\simeq\sigma^U$ is one-dimensional over $k$ by Lemma 2 in \cite{barthel1994}.
\end{proof}
We recall the following key lemma on smooth finitely presented $A[X]_{\phi}$-modules in \cite{schraen2015presentation}.
\begin{lemma}[\cite{schraen2015presentation}, Lemma 1.13]\label{lemma1.13}
	Let $M$ be a smooth finitely presented $A[X]_{\phi}$-module. Then there exists an increasing sequence of sub-$A[X]_{\phi}$-modules $(M_i)_{i\geq 0}$, a sequence of finite-dimensional $k$-vector spaces $(V_i)_{i\geq 0}$ such that there exist isomorphisms $M_{i+1}/M_i\simeq A[X]_{\phi}\otimes_A V_i$ as $A[X]_{\phi}$-modules, and if we let $\widetilde{M}=\cup_i M_i$, then $\Tor_d^A(k,M)_{tors}\simeq \Tor_d^A(k,M/\widetilde{M})$. In particular, $M/\widetilde{M}$ is admissible and each $M_i$ is of finite presentation. 
\end{lemma}
\section{Presentations of supersingular representations}\label{secmain}
We prove some lemmas on $A[X]_{\phi}$-modules. 
\begin{lemma}\label{lammatech1}
Let $M$ be a non-zero, smooth, finitely presented $A[X]_{\phi}$-module. Assume that $\Tor^A_d(k,M)$ is a torsion free $k[X]$-module. Then $\Tor_0^A(k,M)$ is infinite-dimensional over $k$. 
\end{lemma}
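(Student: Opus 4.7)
The strategy is to apply Lemma \ref{lemma1.12} to $M$, use the torsion-freeness hypothesis to force the resulting filtration to exhaust all of $M$, and finally exploit right-exactness of $k\otimes_A -$ on the top graded piece.

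First I apply Lemma \ref{lemma1.12} to $M$, obtaining an increasing chain of sub-$A[X]_\phi$-modules $0 = M_0 \subset M_1 \subset \cdots$, finite-dimensional $k$-vector spaces $V_i$, and $A[X]_\phi$-isomorphisms $M_{i+1}/M_i \simeq A[X]_\phi \otimes_A V_i$, such that $\widetilde{M} := \bigcup_i M_i$ satisfies $\Tor_d^A(k, M/\widetilde{M}) \simeq \Tor_d^A(k, M)_{tors}$ and $M/\widetilde{M}$ is admissible. Since $\Tor_d^A(k, M)$ is torsion-free over $k[X]$ by hypothesis, $\Tor_d^A(k, M/\widetilde{M}) = 0$. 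Using the non-canonical isomorphism of functors $N[\mathfrak{m}]\simeq \Tor_d^A(k, N)$ recalled in \cref{secrepresentationgl2}, this yields $(M/\widetilde{M})[\mathfrak{m}] = 0$. But $M/\widetilde{M}$ is smooth, and a nonzero smooth $A$-module has nonzero socle: any nonzero cyclic submodule is Artinian and therefore contains a simple $A$-submodule isomorphic to $k$, which lies in the $\mathfrak{m}$-socle. Hence $M/\widetilde{M} = 0$ and $M = \bigcup_i M_i$.

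Next, because $M$ is finitely generated over $A[X]_\phi$ and the $(M_i)$ form a directed chain, we have $M = M_n$ for some smallest $n$; since $M \neq 0 = M_0$ we have $n \geq 1$, and by the minimality of $n$, $V_{n-1} \neq 0$. Applying the right-exact functor $k\otimes_A -$ to the short exact sequence
\[0 \to M_{n-1} \to M \to A[X]_\phi \otimes_A V_{n-1} \to 0\]
yields a surjection $\Tor_0^A(k, M) \twoheadrightarrow \Tor_0^A(k, A[X]_\phi \otimes_A V_{n-1})$. A direct computation identifies the target with $k[X] \otimes_k V_{n-1}$: the left ideal $\mathfrak{m} A[X]_\phi$ is in fact two-sided, with quotient ring $k[X]$, and the left $A$-module structure on $V_{n-1}$ factors through $A \to k$, so $k \otimes_A (A[X]_\phi \otimes_A V_{n-1}) \simeq (A[X]_\phi/\mathfrak{m} A[X]_\phi) \otimes_k V_{n-1} = k[X] \otimes_k V_{n-1}$. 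Since $V_{n-1} \neq 0$, this target is infinite-dimensional over $k$, and hence so is $\Tor_0^A(k, M)$.

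The only delicate point is the first step, where the vanishing of $\Tor_d^A(k, M/\widetilde{M})$ must be promoted to the vanishing of $M/\widetilde{M}$ itself via the socle argument for smooth $A$-modules; once the filtration is known to exhaust $M$, the finite generation of $M$ forces a finite filtration with at least one nonzero top piece, and the rest is a right-exactness calculation.
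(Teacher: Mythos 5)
Your proof is correct and follows essentially the same route as the paper: apply Lemma \ref{lemma1.12}, use torsion-freeness of $\Tor_d^A(k,M)$ to conclude $M=\widetilde{M}$, use finite generation to stop the filtration at some $M_n$, and surject $\Tor_0^A(k,M)$ onto $\Tor_0^A(k,A[X]_\phi\otimes_A V_{n-1})\simeq k[X]\otimes_k V_{n-1}\neq 0$. The only difference is cosmetic: where the paper cites Lemma 1.7, Proposition 1.3 and Example 1.5 of Schraen, you supply short direct arguments (the socle argument for smooth modules with vanishing $\mathfrak{m}$-torsion, and the identification $A[X]_\phi/\mathfrak{m}A[X]_\phi\simeq k[X]$), both of which are valid.
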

\begin{proof}
	By Lemma \ref{lemma1.13}, we can find an increasing sequence of sub-$A[X]_{\phi}$-modules $(M_i)_{i\geq 0}$, a sequence of finite-dimensional $k$-vector spaces $(V_i)_{i\geq 0}$ such that there exist isomorphisms $M_{i+1}/M_i\simeq A[X]_{\phi}\otimes_A V_i$ of $A[X]_{\phi}$-modules with $M_0=0$, and if we let $\widetilde{M}=\cup_i M_i$, then $\Tor_d^A(k,M)_{tors}\simeq \Tor_d^A(k,M/\widetilde{M})$. Thus $\Tor_d^A(k,M/\widetilde{M})=0$ by assumptions. Hence $M=\widetilde{M}$ by Lemma 1.8 in \cite{schraen2015presentation}. Since $M$ is finitely generated, there exists a minimal $n\in \N$ such that $M=M_{n}$. Since $M$ is non-zero, we have $n\geq 1$ and $M_n\neq M_{n-1}$. We have a surjection $$M\twoheadrightarrow M/M_{n-1}\simeq A[X]_{\phi}\otimes_A V_{n-1}.$$ Thus we have a surjection 
	$$\Tor_0^A(k,M)\twoheadrightarrow \Tor_0^A(k,A[X]_{\phi}\otimes_A V_{n-1}).$$
	But by Proposition 1.4 and Example 1.6 in \cite{schraen2015presentation}, $\Tor_0^A(k,A[X]_{\phi}\otimes_A V_{n-1})\simeq k[X]\otimes\Tor_0^A(k,V_{n-1})$ is a free $k[X]$-module of rank $\text{dim}_k{V_{n-1}}$. Assume that $\Tor_0^A(k,M)$ is finite-dimensional over $k$. Then $V_{n-1}$ is zero by the surjection above. This contradicts that $M_n\neq M_{n-1}$. Hence $\Tor_0^A(k,M)$ is infinite-dimensional over $k$.
\end{proof} 

\begin{lemma}\label{h0>0}
Let $M$ be a smooth, finitely presented $A[X]_{\phi}$-module and $N$ be a non-zero sub-$A[X]_{\phi}$-module of $M$. Assume that $M/N$ is finitely presented and admissible, and $\Tor_d^A(k,N)$ is torsion free. Then $\Tor_0^A(k,M)$ is infinite-dimensional over $k$.
\end{lemma}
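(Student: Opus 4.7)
My plan is to apply Lemma \ref{lammatech1} to $N$ and then deduce the infinite-dimensionality of $\Tor_0^A(k,M)$ from the long exact sequence of $\Tor_\bullet^A(k,-)$ associated to $0 \to N \to M \to M/N \to 0$. First I would check that $N$ satisfies the hypotheses of Lemma \ref{lammatech1}: it is non-zero by assumption, smooth as a sub-$A$-module of the smooth module $M$, and $\Tor_d^A(k,N)$ is torsion-free by assumption. For finite presentation, the short exact sequence with $M$ and $M/N$ both finitely presented forces $N$ to be finitely generated, and since $A[X]_{\phi}$ is coherent by Proposition~1.3 of \cite{emerton2008class}, every finitely generated submodule of the finitely presented module $M$ is itself finitely presented. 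Lemma \ref{lammatech1} therefore yields that $\Tor_0^A(k,N)$ is infinite-dimensional over $k$.

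Applying $\Tor_\bullet^A(k,-)$ to the short exact sequence produces the exact piece
\[
\Tor_1^A(k,M/N) \longrightarrow \Tor_0^A(k,N) \longrightarrow \Tor_0^A(k,M) \longrightarrow \Tor_0^A(k,M/N) \longrightarrow 0,
\]
so it suffices to show that $\Tor_1^A(k,M/N)$ is finite-dimensional over $k$: in that case, the image of $\Tor_0^A(k,N) \to \Tor_0^A(k,M)$ has finite codimension in the infinite-dimensional space $\Tor_0^A(k,N)$, and is therefore itself infinite-dimensional, forcing $\Tor_0^A(k,M)$ to be infinite-dimensional.

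The remaining point, and the main technical input, is the finite-dimensionality of $\Tor_i^A(k,V)$ for any admissible smooth $A$-module $V$. I would prove this via Pontryagin duality: $V^\vee$ is a pseudo-compact $A$-module with $V^\vee/\mathfrak{m}V^\vee \simeq (V[\mathfrak{m}])^\vee$ finite-dimensional over $k$, so topological Nakayama implies $V^\vee$ is a finitely generated $A$-module. Using the Koszul resolution of $k$ by finite-rank free $A$-modules, Pontryagin duality identifies $\Tor_i^A(k,V)$ with $\text{Ext}_A^i(k,V^\vee)^\vee$, and each $\text{Ext}_A^i(k,V^\vee)$ is finitely generated over the Noetherian local ring $A$ and annihilated by $\mathfrak{m}$, hence finite-dimensional over $k$. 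Applying this with $V = M/N$ completes the argument.
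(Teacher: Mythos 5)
Your proposal is correct and follows essentially the same route as the paper: deduce finite presentation of $N$ from coherence of $A[X]_{\phi}$, apply Lemma \ref{lammatech1} to get $\dim_k\Tor_0^A(k,N)=\infty$, and conclude via the long exact sequence using finite-dimensionality of $\Tor_i^A(k,M/N)$ for the admissible quotient. The only difference is that the paper simply cites Corollary 1.11 of \cite{schraen2015presentation} for that last finiteness statement, whereas you reprove it by the standard duality/Nakayama/Koszul argument, which is a valid (if redundant) substitute.
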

\begin{proof}
	Since $M/N$ and $M$ are finitely presented, by the coherence of  $A[X]_{\phi}$ (Proposition 1.3, \cite{emerton2008class}), $N$ is of finite presentation. Thus by Lemma \ref{lammatech1}, $\Tor_0^A(k,N)$ is infinite-dimensional over $k$. Consider the long exact sequence
	$$\cdots \rightarrow \Tor_1^A(k,M/N)\rightarrow \Tor_0^A(k,N)\rightarrow \Tor_0^A(k,M)\rightarrow \Tor_0^A(k,M/N)\rightarrow 0.$$
	Since $M/N$ is admissible, by Corollary 1.12 in \cite{schraen2015presentation}, $\Tor_1^A(k,M/N)$ and $\Tor_0^A(k,M/N)$ are finite-dimensional over $k$. Since $\Tor_0^A(k,N)$ is infinite-dimensional over $k$, so is $\Tor_0^A(k,M)$.
\end{proof}
\begin{definition}
	A smooth representation $\pi$ of $G$ is called of finite presentation if there exists an irreducible smooth representation $\sigma$ of $KZ$ and a surjection $$\ind_{KZ}^G\sigma\twoheadrightarrow \pi$$
	such that the kernel is finitely generated as a $k[G]$-module.
\end{definition}

\begin{remark}
	By Proposition 4.4 in \cite{hu2012diagrammes}, if $\pi$ is of finite presentation, then for all smooth finite-dimensional sub-$KZ$-representation $\sigma$ of $\pi$ which generates the $G$-representation $\pi$, the kernel of the surjection $\ind_{KZ}^G\sigma\twoheadrightarrow \pi$ is finitely generated as a $k[G]$-module.
\end{remark}
\begin{remark}
	If $F=\Qp$, then by the classifications in \cite{barthel1994} and \cite{Breuil2003gl2}, any irreducible representation of $\GL_2(\Qp)$ with central character is of finite presentation.
\end{remark}
Assume $\pi$ is a smooth irreducible representation of $G$ with central character, and $\sigma\subset \pi$ is an irreducible smooth sub-$KZ$-representation. Let $I^+(\pi,\sigma):=\mtwo{\varpi^{\N}}{\cO}{}{1}\sigma\subset \pi$ be the $A[X]_{\phi}$-submodule of $\pi$ generated by $\sigma$. Then $I^+(\pi,\sigma)$ is the image of $I_{\geq 0}(\sigma)$ in $\pi$ via the map $\ind_{KZ}^G\sigma\twoheadrightarrow \pi$. We recall the following result of Yongquan Hu. 
\begin{theorem}[\cite{hu2012diagrammes}, Theorem 1.3]\label{thmhu}
If $\pi$ is of finite presentation, then $I^+(\pi,\sigma)^U$ is a finite-dimensional $k$-vector space.
\end{theorem}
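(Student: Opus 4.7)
The plan is to transfer the hypothesis ``$\pi$ is $k[G]$-finitely presented'' to the statement that $I^+(\pi,\sigma)$ is finitely presented over $A[X]_\phi$, and then to use Lemma~\ref{lemma1.12} to extract admissibility of $I^+(\pi,\sigma)$ as an $A$-module.

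First, fix a surjection $\ind_{KZ}^G\sigma \twoheadrightarrow \pi$ with kernel $K$ generated as a $k[G]$-module by $v_1,\ldots,v_r$, and consider the exact sequence of $A[X]_\phi$-modules
$$0 \to K^+ \to I_{\geq 0}(\sigma) \to I^+(\pi,\sigma) \to 0, \qquad K^+ := K \cap I_{\geq 0}(\sigma).$$
Since $I_{\geq 0}(\sigma) \simeq A[X]_\phi \otimes_A \sigma$ is finitely presented over $A[X]_\phi$, by coherence (Proposition 1.3 of \cite{emerton2008class}) it suffices to show $K^+$ is finitely generated over $A[X]_\phi$. This reduction relies on the Iwahori/Bruhat decomposition of $G$: every $g \in G$ decomposes, up to $KZ$, as a bounded product of powers of $\alpha^{\pm 1}$ and the Weyl element $w_0 = \mtwo{0}{1}{1}{0}$, and each translate $g \cdot v_j$ that happens to lie in $I_{\geq 0}(\sigma)$ can then be expressed as an $A[X]_\phi$-combination of finitely many ``boundary'' translates of the $v_j$'s obtained by a bounded number of $w_0$- and $\alpha^{-1}$-moves, using that each $v_j$ has finite support on $KZ$-cosets.

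Next, apply Lemma~\ref{lemma1.12} to $M := I^+(\pi,\sigma)$, producing a filtration $(M_i)_{i\geq 0}$ with $M_{i+1}/M_i \simeq A[X]_\phi \otimes_A V_i$ and $M/\widetilde{M}$ admissible, where $\widetilde{M} = \bigcup_i M_i$. For each $V_i$, $\Tor_d^A(k, A[X]_\phi \otimes_A V_i) \simeq k[X] \otimes_k V_i$ is a free $k[X]$-module of rank $\dim V_i$; tracking the long exact sequences (using $\Tor_{d+1}^A(k,-) = 0$ since $A$ has global dimension $d$) shows that if $j_0$ is the smallest index with $V_{j_0} \neq 0$, then $\Tor_d^A(k, \widetilde{M})$ contains $k[X] \otimes_k V_{j_0}$ as a free submodule, and this injects into $\Tor_d^A(k, M)$. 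Combined with the finite-dimensionality of $\Tor_d^A(k, M/\widetilde{M})$ coming from admissibility of $M/\widetilde{M}$, one obtains the dichotomy that $M^U \simeq \Tor_d^A(k, M)$ is finite-dimensional over $k$ if and only if every $V_i = 0$, equivalently $M$ is admissible.

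The remaining and genuinely hard step is to argue that $M = I^+(\pi,\sigma)$ is admissible: finite presentation over $A[X]_\phi$ does not formally imply admissibility, since $A[X]_\phi \otimes_A V$ with $V \neq 0$ is finitely presented but non-admissible. The key additional input is that $M$ sits inside the \emph{irreducible} $G$-representation $\pi$, so the full $G$-action (in particular $w_0$ and $\alpha^{-1}$) relates $M$ to its $G$-complement in $\pi$ and provides relations among monoid-translates of $\sigma$ that should preclude the existence of a free $A[X]_\phi$-summand $A[X]_\phi \otimes_A V_{j_0}$ inside $M$, thereby forcing $\widetilde{M} = 0$. This final step is the main obstacle of the proof and is achieved in \cite{hu2012diagrammes} via a detailed study of the canonical diagram attached to $\pi$ and the Iwahori-theoretic interaction between the positive and negative parts.
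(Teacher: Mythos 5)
This statement is not proved in the paper at all: it is Hu's Theorem 1.3, recalled with a citation to \cite{hu2012diagrammes} and used as a black box. Your proposal, despite its length, ends in the same place: you explicitly concede that the decisive step --- ruling out a nonzero free block $A[X]_\phi\otimes_A V_{j_0}$ inside $I^+(\pi,\sigma)$, i.e.\ proving its admissibility --- ``is achieved in \cite{hu2012diagrammes}''. So, judged as a proof, there is a genuine gap: the entire content of the theorem is deferred to the reference (which is exactly what the paper itself does), while the scaffolding you add around it is not what carries the weight.

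Moreover, the steps you do sketch are not sound as written. First, the claim that $K^+=K\cap I_{\geq 0}(\sigma)$ is finitely generated over $A[X]_\phi$ ``by a bounded number of $w_0$- and $\alpha^{-1}$-moves'' is an assertion, not an argument: a finitely generated $k[G]$-submodule of $\ind_{KZ}^G\sigma$ can intersect $I_{\geq 0}(\sigma)$ in a complicated way, and controlling that intersection is precisely the sort of tree/diagram analysis Hu has to carry out; the Bruhat decomposition does not give it for free. Note also that this is not the route taken in the literature: Hu's conclusion is only the admissibility statement $\dim_k I^+(\pi,\sigma)^U<\infty$, and in the present note the finite presentation of $M(\pi,\sigma)$ over $A[X]_{\phi_2}$ is deduced \emph{from} Hu's theorem together with Lemma 2.5 of \cite{schraen2015presentation}, not the other way around. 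Second, the dichotomy you extract from Lemma~\ref{lemma1.12} (finite-dimensional $U$-invariants iff all $V_i=0$) is correct but merely reformulates what must be shown; your ``key additional input'' --- that sitting inside an irreducible $\pi$ should preclude a free summand --- is exactly the theorem, and you provide no mechanism for it. In short, the paper offers no internal proof to compare with, and your attempt does not supply one either.
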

We will prove the following theorem in \cref{secnonadmissible}.
\begin{theorem}\label{theoremnonadmissiblility}
	The $A$-module $L(\sigma)$ is not admissible if $[F:\Qp]\geq 2$. In particular, the $k[X]$-module $\Tor_d^A(k,L(\sigma))$ is not torsion.
\end{theorem}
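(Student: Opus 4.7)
The plan is to reduce the statement to an explicit construction of $U$-invariants in $L(\sigma)$ and then amplify via the coherent-ring machinery of \cref{secrepresentationgl2}. Through the non-canonical isomorphism $\Tor_d^A(k, L(\sigma)) \simeq L(\sigma)[\mathfrak{m}] = L(\sigma)^U$, Proposition \ref{proptord0L} tells us that the $k[X]$-torsion part of $\Tor_d^A(k, L(\sigma))$ is one-dimensional, corresponding to the image of $\sigma^U$. Consequently both assertions of the theorem follow at once from exhibiting a single non-torsion class in $\Tor_d^A(k, L(\sigma))$: such a class generates a free $k[X]$-submodule, forcing $\Tor_d^A(k, L(\sigma))$---hence $L(\sigma)^U$---to be infinite-dimensional over $k$, and simultaneously witnessing non-torsion.

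To produce this class I would work inside $I_{\geq 0}^e(\sigma)$ and seek a representative of the form
\[ v = \sum_{n \geq 0} \sum_{\mu \in I_{2n}} \left[\alpha^{2n}\Uni{\mu},\, w_{n,\mu}\right], \qquad w_{n,\mu} \in \sigma, \]
with $\Uni{a}.v - v \in T(I_{\geq 1}^o(\sigma))$ for every $a \in \cO$, so that the class of $v$ in $L(\sigma)$ is $U$-invariant. Expanding this condition against the explicit Hecke formula (\ref{equationofT}) turns it into an infinite system of recursion relations on the weights $w_{n,\mu}$, coupling each level $R_{2n}(\sigma)$ to $R_{2n\pm 1}(\sigma)$ via $T_+$ and $T_-$. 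Following the strategy of \cite{Breuil2003gl2}, \cite{schein2011irreducibility}, \cite{morra2012some} and \cite{hendel2019universal}, these relations should be solvable level-by-level by propagating a chosen initial datum forward through $T_+$ and absorbing the unwanted $U$-translates into $T_-$-images one level higher.

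The hypothesis $[F:\Qp] \geq 2$ is expected to enter through both the size of the residue coset sets $I_n$ (governed by $q=p^f$) and the tensor-product decomposition $\sigma|_K \simeq (\chi \circ \text{det}) \otimes \bigotimes_{j=0}^{f-1}\text{Sym}^{r_j}\circ\text{Fr}^j$, with the ramification $e$ further contributing through the action of $\varpi$ on Teichmüller lifts. When $d = ef \geq 2$, the additional arithmetic and combinatorial room allows the recursion to admit solutions whose class lies genuinely outside the image of $\sigma$ in $L(\sigma)$; when $d = 1$ the system is rigid, consistent with the complete classification of \cite{Breuil2003gl2}. The main obstacle is the combinatorial core of the argument: solving the recursion in a closed global form while verifying non-triviality, i.e.\ non-absorption into the one-dimensional torsion part coming from $\sigma^U$. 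This non-triviality ultimately reduces to a non-vanishing statement for an explicit polynomial expression over $\F_q$, and the detailed construction is deferred to \cref{secnonadmissible}.
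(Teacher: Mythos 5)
Your overall reduction is the same as the paper's, and it is correct: by Proposition \ref{proptord0L} the torsion part of $\Tor_d^A(k,L(\sigma))\simeq L(\sigma)^U$ is exactly one-dimensional, namely the image of $\sigma^U$, so exhibiting a single $U$-invariant class of $L(\sigma)$ outside that image forces a nonzero free $k[X]$-part, hence $\dim_k L(\sigma)^U=\infty$, which gives both non-admissibility and non-torsionness at once. The problem is that everything after this reduction --- which is the entire mathematical content of the theorem --- is deferred rather than proved, and the construction you sketch is flawed as stated. Your candidate invariant $v=\sum_{n\geq 0}\sum_{\mu\in I_{2n}}\bigl[\alpha^{2n}\Uni{\mu},w_{n,\mu}\bigr]$ is an infinite sum over all even levels, but $I_{\geq 0}^e(\sigma)$ is a direct sum of the $R_{2n}(\sigma)$ inside the compact induction, so its elements have finitely many nonzero components; a level-by-level recursion that keeps ``propagating a chosen initial datum forward through $T_+$'' has no reason to terminate, and if it does not terminate your $v$ simply is not an element of $\ind_{KZ}^G\sigma$. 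You would need an argument forcing the recursion to stop, and none is offered.

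The paper's proof avoids this issue entirely by a sharper choice of what to construct: a single element $g\in R_2(\sigma)$ (one level only) with $g\notin T_+R_1(\sigma)$ and $ug-g\in TR_1'(\sigma)$ for all $u\in U$, where $R_1'(\sigma)=\ker\bigl(T_-|_{R_1(\sigma)}\bigr)$ (Lemma \ref{mainlem}). Requiring the correction terms to come from the kernel of $T_-$ means $TR_1'(\sigma)=T_+R_1'(\sigma)\subset R_2(\sigma)$, so no terms spill into $R_0(\sigma)$ or into higher levels and no infinite propagation is ever needed; a short argument using the injectivity of $T_+$ then shows the class of $g$ in $L(\sigma)$ is not in the image of $R_0(\sigma)$, giving $\dim_k L(\sigma)^U\geq 2$. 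The real work, which your proposal does not carry out, is the explicit case analysis producing such a $g$: the ramified case $e\geq 2$ (using $[a]+[b]\equiv[a+b]\bmod\varpi^2$), the unramified case with $\vec r\neq(p-1,\dots,p-1)$, and the unramified case with $\vec r=(p-1,\dots,p-1)$, relying on the Witt-vector addition estimate (Lemma \ref{lemmawitt}) and the polynomial summation identity over $\F_q$ (Lemma \ref{lemmasumfq}). Without this construction, or at least a termination mechanism for your recursion together with a concrete non-triviality verification, the proposal is a plan rather than a proof.
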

Now assuming Theorem \ref{theoremnonadmissiblility}, we prove the main theorem.
\begin{theorem}\label{maintheorem}
	If $\pi$ is a smooth supersingular representation of $\GL_2(F)$ with central character, then $\pi$ is not of finite presentation when $[F:\Qp]\geq 2$. 
\end{theorem}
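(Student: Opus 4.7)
The plan is to suppose for contradiction that $\pi$ is of finite presentation and to apply Lemma \ref{h0>0} with $M := L(\sigma)$ to contradict Proposition \ref{proptord0L}. Fix an irreducible $KZ$-subrepresentation $\sigma \subset \pi$. Since $\pi$ is supersingular, the Hecke operator $T$ acts by zero on the image of $\ind_{KZ}^G\sigma$ in $\pi$; in particular $T(I_{\geq 1}^o(\sigma))$ is sent to zero, so the natural map $I_{\geq 0}^e(\sigma) \to \pi$ factors through $L(\sigma)$, yielding a surjective morphism of $A[X]_{\phi_2}$-modules
$$p\colon L(\sigma) \twoheadrightarrow I^{+,e}(\pi,\sigma),$$
where $I^{+,e}(\pi,\sigma) \subset \pi$ denotes the image of $I_{\geq 0}^e(\sigma)$. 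Set $N := \ker p$.

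I then plan to verify the four hypotheses of Lemma \ref{h0>0} for $(L(\sigma), N)$. That $L(\sigma)$ is smooth and finitely presented over $A[X]_{\phi_2}$ follows from its presentation as the quotient of the free module $I_{\geq 0}^e(\sigma) \simeq A[X]_{\phi_2} \otimes_A \sigma$ by the finitely generated submodule $T(I_{\geq 1}^o(\sigma))$ (using $I_{\geq 1}^o(\sigma) \simeq A[X]_{\phi_2} \otimes_A R_1(\sigma)$ and coherence of $A[X]_{\phi_2}$). Admissibility of $I^{+,e}(\pi,\sigma)$ as an $A$-module follows from Theorem \ref{thmhu}: $I^+(\pi,\sigma)^U$ is finite-dimensional, and $I^{+,e}(\pi,\sigma)^U \hookrightarrow I^+(\pi,\sigma)^U$ by left exactness of $(-)^U$, while smoothness is clear. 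For torsion-freeness of $\Tor_d^A(k, N)$, the long exact Tor sequence for $0 \to N \to L(\sigma) \to I^{+,e}(\pi,\sigma) \to 0$, combined with $\Tor_{d+1}^A(k,-)=0$ (as $A$ is regular of dimension $d$), gives an injection $\Tor_d^A(k, N) \hookrightarrow \Tor_d^A(k, L(\sigma))$; by Proposition \ref{proptord0L}, the $k[X]$-torsion of the latter is $1$-dimensional, spanned by the image of $\sigma^U$ under $\sigma \hookrightarrow L(\sigma)$, which survives non-trivially in $I^{+,e}(\pi,\sigma) \subset \pi$ since $\sigma \hookrightarrow \pi$, and therefore does not lie in $N$. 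Finally, $N \neq 0$, for otherwise $L(\sigma) \simeq I^{+,e}(\pi,\sigma)$ would be admissible, contradicting Theorem \ref{theoremnonadmissiblility}.

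The main obstacle I anticipate is verifying that $I^{+,e}(\pi,\sigma)$ is finitely presented as an $A[X]_{\phi_2}$-module, equivalently (by coherence of $A[X]_{\phi_2}$ and finite presentation of $L(\sigma)$) that $N$ is finitely generated. I would approach this by invoking the stronger consequence of finite presentation available from Proposition 4.4 of \cite{hu2012diagrammes}: finite presentation of $\pi$ as a $k[G]$-module forces the kernel of $\ind_{KZ}^G\sigma \twoheadrightarrow \pi$ to be finitely generated over $k[G]$. Intersecting this kernel with $I_{\geq 0}^e(\sigma)$ and passing modulo $T(I_{\geq 1}^o(\sigma))$ should exhibit $N$ as generated by finitely many $A[X]_{\phi_2}$-translates of the even components of the chosen relations, using the parity-preserving structure of the Hecke relation $T=0$ visible in \eqref{equationofT}.

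Once all four hypotheses are in place, Lemma \ref{h0>0} concludes that $\Tor_0^A(k, L(\sigma))$ is infinite-dimensional over $k$, which directly contradicts the vanishing $\Tor_0^A(k, L(\sigma)) = 0$ established in Proposition \ref{proptord0L}. Hence no smooth supersingular $\pi$ with central character can be of finite presentation when $[F:\Qp] \geq 2$.
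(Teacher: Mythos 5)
Your skeleton coincides with the paper's proof: factor $I_{\geq 0}^e(\sigma)\to\pi$ through $L(\sigma)$, let $N$ be the kernel of $L(\sigma)\twoheadrightarrow M(\pi,\sigma)$ (your $I^{+,e}(\pi,\sigma)$), deduce torsion-freeness of $\Tor_d^A(k,N)$ from Proposition \ref{proptord0L} together with the injectivity of $\sigma\to M(\pi,\sigma)$ and left exactness of $\Tor_d^A(k,-)$, get admissibility of $M(\pi,\sigma)$ from Theorem \ref{thmhu}, rule out $N=0$ by Theorem \ref{theoremnonadmissiblility}, and contradict $\Tor_0^A(k,L(\sigma))=0$ via Lemma \ref{h0>0}. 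All of those steps are carried out correctly and in essentially the same way as the paper (the paper merely orders the two contradictions differently).

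The gap is exactly at the point you flag: Lemma \ref{h0>0} requires $L(\sigma)/N\simeq M(\pi,\sigma)$ to be finitely presented over $A[X]_{\phi_2}$, and your proposed route to it does not work as stated. Knowing from Proposition 4.4 of \cite{hu2012diagrammes} that the kernel $R$ of $\ind_{KZ}^G\sigma\twoheadrightarrow\pi$ is finitely generated over $k[G]$ does not let you conclude that $R\cap I_{\geq 0}^e(\sigma)$ is generated over $A[X]_{\phi_2}$ by $T(I_{\geq 1}^o(\sigma))$ together with finitely many elements: an element of that intersection is a $k[G]$-combination of the chosen generators whose individual summands are spread over the whole tree (lower-triangular and Weyl translates do not preserve the half-tree $\bigoplus_{n\geq 0}R_n(\sigma)$, and already $T(R_0(\sigma))$ has a component outside every $R_n(\sigma)$ by \eqref{equationofT}); only the sum lands in $I_{\geq 0}^e(\sigma)$, so there are no well-defined ``even components of the chosen relations'' to extract, and finite generation over $k[G]$ does not transfer to finite generation over $A[X]_{\phi_2}$ of such an intersection. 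Controlling intersections of this kind is precisely the hard content of Hu's theory, and the paper sidesteps it: having admissibility of $M(\pi,\sigma)$ from Theorem \ref{thmhu} and noting that $M(\pi,\sigma)$ is a finitely generated $A[X]_{\phi_2}$-module stable under $H=\mtwo{\cO^{\times}}{}{}{1}$, it invokes Lemma 2.5 of \cite{schraen2015presentation} (as in the proof of Theorem 2.23 there), which shows that such an admissible, finitely generated module with this extra structure is automatically of finite presentation. Replacing your intersection argument by that citation closes the gap; the rest of your proof then goes through.
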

\begin{proof}
	We can find a surjection $\ind_{KZ}^G(\sigma)/(T)\twoheadrightarrow{\pi}$ for some irreducible smooth sub-$KZ$-representation $\sigma$ of $\pi$ by the definition of supersingular representations. Let $I^+(\pi,\sigma)$ be the $A[X]_{\phi}$-submodule of $\pi$ generated by $\sigma$ and let $M(\pi,\sigma)$ be the $A[X]_{\phi_2}$-submodule of $\pi$ generated by $\sigma$. Then $M(\pi,\sigma)\subset I^+(\pi,\sigma)$. The map of $A[X]_{\phi_2}$-modules $I_{\geq 0}^e(\sigma)\hookrightarrow \ind_{KZ}^G\sigma\rightarrow \pi$ factors through $L(\sigma)\rightarrow \pi$ with image $M(\pi,\sigma)$. Let $N(\pi,\sigma)$ be the kernel of the morphism $L(\sigma)\rightarrow M(\pi,\sigma)$ of $A[X]_{\phi_2}$-modules. We have an exact sequence $$0\rightarrow \Tor_d^A(k,N(\pi,\sigma))\rightarrow \Tor_d^A(k,L(\sigma))\rightarrow \Tor_d^A(k,M(\pi,\sigma)).$$
	By Proposition \ref{proptord0L}, $\Tor_d^A(k,L(\sigma))_{tors}$ is generated by the image of $\sigma^U\simeq \Tor_d^A(k,\sigma)$ via the map $\sigma\rightarrow I_{\geq 0}^e(\sigma)\twoheadrightarrow L(\sigma)$. The non-zero composition map $\sigma \rightarrow L(\sigma)\rightarrow M(\pi, \sigma)$ induces morphisms $\Tor_d^A(k,\sigma)\stackrel{\sim}{\rightarrow} \Tor_d^A(k,L(\sigma))_{tors}\rightarrow \Tor_d^A(k,M(\pi,\sigma))$. The composition $\sigma\rightarrow M(\pi,\sigma)$ is injective since $\sigma$ is irreducible. Since $\Tor_d^A(k,-)$ is left exact, we get an injection $\Tor_d^A(k,L(\sigma))_{tors}\hookrightarrow \Tor_d^A(k,M(\pi,\sigma))$. Then $\Tor_d^A(k,N(\pi,\sigma))$ must be a torsion free $k[X]$-module. \par
	Now if $\pi$ is finitely presented, $M(\pi,\sigma )\subset I^+(\pi, \sigma)$ is admissible by Hu's result (Theorem \ref{thmhu}). Since $M(\pi,\sigma)$ is generated by $\sigma$, it is a finitely generated $A[X]_{\phi_2}$-module. Moreover, the proof of Theorem 2.24 in \cite{schraen2015presentation} shows that $M(\pi,\sigma)$ is of finite presentation  ($M(\pi,\sigma)$ is stable under the action of $H=\mtwo{\cO^{\times}}{}{}{1}$, then use Lemma 2.6 in \cite{schraen2015presentation}). If $N(\pi,\sigma)\neq 0$, then all the assumptions in Lemma \ref{h0>0} are satisfied if we take $M=L(\sigma)$ and $N=N(\pi,\sigma)$. Remark that here $L(\sigma),N(\pi,\sigma)$ are modules over the coherent ring $A[X]_{\phi_2}$ rather than $A[X]_{\phi}$, but Lemma \ref{h0>0} holds true for $A[X]_{\phi_2}$. Thus by Lemma \ref{h0>0}, $\Tor_0^A(k,L(\sigma))$ has infinite dimension over $k$, which contradicts that $\Tor_0^A(k,L(\sigma))=0$ (Proposition \ref{proptord0L})! Hence $N(\pi,\sigma)=0$. Then $L(\sigma)\simeq M(\pi,\sigma)$ is admissible. This contradicts Theorem \ref{theoremnonadmissiblility}! Hence $\pi$ is not of finite presentation. 
\end{proof}
\section{Non-admissibility}\label{secnonadmissible}
Assume $\sigma =\text{Sym}^{\vec{r}}\otimes (\chi\circ\text{det})$, where $\vec{r}=(r_0,\cdots,r_{f-1})$ such that $ 0 \leq r_0,\cdots,r_{f-1}\leq p-1$, is an irreducible representation of $KZ$ with $\varpi\in Z$ acting on $\sigma$ as a scalar $\nu\in k^{\times}$. Recall that $$R_1(\sigma)=\bigoplus_{\mu\in\F_q}\left[\mtwo{\varpi}{[\mu]}{}{1},\sigma\right],\quad R_2(\sigma)=\bigoplus_{\mu,\lambda\in\F_q}\left[\mtwo{\varpi^2}{[\mu]+\varpi[\lambda]}{}{1},\sigma\right].$$ For any $\mu\in\F_q, u_{\vec{i}}\in k, \vec{i}\in\Z^f, \vec{0} \leq\vec{i}\leq \vec{r}$, the operators $T_{\pm}$ act on $\left[\mtwo{\varpi}{[\mu]}{}{1}, \sum_{0\leq\vec{i}\leq \vec{r}} u_{\vec{i}}x^{\vec{r}-\vec{i}}y^{\vec{i}}\right]\in R_1(\sigma)$ by the formulas (see (\ref{equationofT})): \begin{align}\label{formulaT1}
	T_+\left(\left[\mtwo{\varpi}{[\mu]}{}{1}, \sum_{0\leq\vec{i}\leq \vec{r}} u_{\vec{i}}x^{\vec{r}-\vec{i}}y^{\vec{i}}\right]\right)&= \sum_{\lambda \in \F_q}\left[\mtwo{\varpi^2}{[\mu]+\varpi [\lambda]}{}{1}, (\sum_{0\leq\vec{i}\leq \vec{r}}u_{\vec{i}}(-\lambda)^{\vec{i}})x^{\vec{r}}\right]\\ \label{formulaT2}
	T_-\left(\left[\mtwo{\varpi}{[\mu]}{}{1}, \sum_{0\leq\vec{i}\leq \vec{r}} u_{\vec{i}}x^{\vec{r}-\vec{i}}y^{\vec{i}}\right]\right)&=\nu \left[\mtwo{1}{}{}{1}, u_{\vec{r}}\otimes_{j=0}^{f-1} (\mu^{p^j}x_j+y_j)^{r_j}\right].
\end{align}
%In fact, we only need to show that $\text{dim}_k((R_0\oplus R_2)/TR_1)^U$ has dimension at least $2$. 
\begin{proof}[Proof of Theorem \ref{theoremnonadmissiblility}]
	We need to prove that $L(\sigma)^U$ is infinite-dimensional over $k$. By Proposition \ref{proptord0L}, the torsion part of the $k[X]$-module $\Tor_d^A(k, L(\sigma))\simeq L(\sigma)^U$ has only dimension $1$. If $\text{dim}_kL(\sigma)^U\geq 2$, the free part of the $k[X]$-module $\Tor_d^A(k, L(\sigma))$ can not be zero and then $\Tor_d^A(k, L(\sigma))$ is infinite-dimensional over $k$ since a non-zero free $k[X]$-module is infinite-dimensional over $k$. So we only need to prove that $\text{dim}_kL(\sigma)^U\geq 2$ to show that $L(\sigma)$ is not an admissible $A$-module. 
	We will prove
	\begin{lemma}\label{mainlem}
		If $[F:\Q_p]\geq 2$, there exists an element $g\in R_2(\sigma)$ such that $g\notin T_+ R_1(\sigma)$ and $ug-g\in TR_1'(\sigma)$ for any $u\in U$, where $R_1'(\sigma)$ is the kernel of $T_-|_{R_1(\sigma)}$.
	\end{lemma}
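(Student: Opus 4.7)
The plan is to construct an explicit $g$ supported on matrices of the form $\mtwo{\varpi^2}{[\mu]+\varpi[\lambda]}{}{1}$. Formula~\eqref{formulaT1} shows that $T_+R_1(\sigma)$ consists precisely of those elements of $R_2(\sigma)$ whose $\sigma$-coordinate in every direct summand $\bigl[\mtwo{\varpi^2}{[\mu]+\varpi[\lambda]}{}{1}, \sigma\bigr]$ is proportional to $x^{\vec{r}}$. Consequently, if we choose $g$ so that its $\sigma$-coordinates involve a monomial $x^{\vec{r}-\vec{e_j}}y^{\vec{e_j}}$ with $r_j \geq 1$ (where $\vec{e_j}$ denotes the $j$-th unit vector), then $g \notin T_+ R_1(\sigma)$ is automatic. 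The degenerate case $\vec{r} = \vec{0}$, where $\sigma$ is a character, will require a short separate argument using that $R_2(\sigma)$ is then a $U$-permutation module.

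To establish the $U$-invariance condition $ug - g \in T R_1'(\sigma) = T_+ R_1'(\sigma)$, I would analyze the action of $u = \mtwo{1}{b}{}{1}$ with $b = [b_0] + \varpi[b_1] + \varpi^2 \beta$, $b_0, b_1 \in \F_q$, $\beta \in \cO$, on a basis vector. Using the Teichmüller carry $[\mu]+[b_0] = [\mu+b_0] + \varpi c_0(\mu, b_0)$ and the analogous expansion at the next $\varpi$-level, one absorbs a right-multiplication by an element of $K$ to obtain
\[
u \cdot \bigl[\mtwo{\varpi^2}{[\mu]+\varpi[\lambda]}{}{1}, v\bigr] = \bigl[\mtwo{\varpi^2}{[\mu+b_0]+\varpi[\lambda + b_1 + \overline{c_0(\mu, b_0)}]}{}{1}, \mtwo{1}{\bar\gamma}{}{1}\cdot v\bigr]
\]
for an explicit $\bar\gamma \in \F_q$ depending polynomially on $\mu, \lambda, b_0, b_1, \bar\beta$ and on the higher Teichmüller carries. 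The key algebraic identity
\[
\bigl(\mtwo{1}{\bar\gamma}{}{1} - 1\bigr) \cdot x^{\vec{r}-\vec{e_j}}y^{\vec{e_j}} = \bar\gamma^{p^j}\, x^{\vec{r}}
\]
forces the "difference" part of $ug - g$ to have $\sigma$-coordinate purely in the $x^{\vec{r}}$-direction, while choosing $g = \sum_{\mu, \lambda \in \F_q} c(\mu, \lambda) \bigl[\mtwo{\varpi^2}{[\mu]+\varpi[\lambda]}{}{1}, x^{\vec{r}-\vec{e_j}}y^{\vec{e_j}}\bigr]$ with $c$ invariant under the relevant $\F_q^2$-shift (e.g.\ $c \equiv 1$) makes the "shift" part cancel by reindexing.

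The resulting expression $ug - g = \sum_{\mu, \lambda} P(\mu, \lambda; u) \bigl[\mtwo{\varpi^2}{[\mu]+\varpi[\lambda]}{}{1}, x^{\vec{r}}\bigr]$ must then be shown to lie in $T_+ R_1'(\sigma)$, i.e.\ $P(\mu, \lambda; u)$ must admit a decomposition $\sum_{\vec{0}\leq \vec{i}\leq \vec{r}} u_{\vec{i}}(\mu)(-\lambda)^{\vec{i}}$ whose top coefficients $\mu\mapsto u_{\vec{r}}(\mu)$ satisfy the $T_-$-kernel relations $\sum_{\mu \in \F_q} u_{\vec{r}}(\mu)\, \mu^{\vec{j}} = 0$ for $\vec{0} \leq \vec{j} \leq \vec{r}$. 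This final combinatorial verification on the Teichmüller carry polynomials is the main obstacle, and is precisely where the hypothesis $[F:\Q_p] = ef \geq 2$ enters. In the unramified case $f \geq 2$, the monomials $\lambda^{kp^j}$ ($1 \leq k \leq p-1$) coming from $\bar c(\lambda - b_1, b_1)^{p^j}$ all have $\lambda$-degree at most $(p-1)p^j < q-1$, so $u_{\vec{r}}(\mu) = 0$ whenever $\vec{r}$ has more than one nonzero component, and one chooses $j$ accordingly. In the ramified case $e \geq 2$, the extra $\varpi$-adic digit affords an independent carry-free shift, allowing the sum defining $g$ to be enlarged (or twisted by a $K_1$-action) so that the kernel condition is met directly. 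For the corner weights $\vec{r} = (p-1)\vec{e_j}$ (forcing $f \geq 2$), one subtracts from $g$ an explicit element of $T_+R_1(\sigma)$ to cancel the obstructive coefficient without affecting the non-vanishing of the $y$-component. When $F = \Q_p$, both mechanisms collapse and no such $g$ exists, consistent with the finite presentation result of Colmez--Breuil.
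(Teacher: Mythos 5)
There is a genuine gap. Your opening claim — that by \eqref{formulaT1} the space $T_+R_1(\sigma)$ consists \emph{precisely} of the elements of $R_2(\sigma)$ whose $\sigma$-coordinate in each summand is proportional to $x^{\vec{r}}$ — is false except when $\vec{r}=(p-1,\dots,p-1)$: a dimension count gives $\dim_k T_+R_1(\sigma)=q\prod_j(r_j+1)$, while the space you describe has dimension $q^2$. Membership in $T_+R_1(\sigma)$ requires, for each fixed $\mu$, that the coefficient of $x^{\vec{r}}$ as a function of $\lambda$ be a combination of the monomials $\lambda^{\sum_j p^j i_j}$ with $0\leq \vec{i}\leq\vec{r}$. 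This is exactly where your uniform choice $g=\sum_{\mu,\lambda}\bigl[\mtwo{\varpi^2}{[\mu]+\varpi[\lambda]}{}{1},x^{\vec{r}-\vec{e_j}}y^{\vec{e_j}}\bigr]$ breaks down in the unramified case with $\vec{r}\neq(p-1,\dots,p-1)$: for $u=\Uni{[a]}$ the double Teichm\"uller carry produces coefficients of $x^{\vec{r}}$ of the form $\bigl(P_2(a,\mu-a)+P_1(\lambda-P_1(a,\mu-a),P_1(a,\mu-a))\bigr)^{p^j}$, whose $\lambda$-monomials (powers $p^{j-1}k$, $1\leq k\leq p-1$, after reducing exponents) need not lie in the allowed set when some $r_l<p-1$; then $ug-g$ is not in $T_+R_1(\sigma)$ at all, and no choice of $j$ or correction by ``an explicit element of $T_+R_1(\sigma)$'' obviously repairs this. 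Your diagnosis that the only remaining issue is the top coefficient $u_{\vec{r}}(\mu)$ (the $T_-$-kernel condition) therefore misidentifies the obstruction, and the sentence ``one chooses $j$ accordingly'' is not a proof.

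In fact the paper handles precisely these cases by changing the test vector rather than the index $j$: for unramified $F$ with $\vec{r}\neq(p-1,\dots,p-1)$ it takes $g=\sum_{\mu,\lambda}\bigl[\mtwo{p^2}{[\mu]+p[\lambda]}{}{1},\lambda^{p^{j_0}(r_{j_0}+1)}x^{\vec{r}}\bigr]$ (a ``forbidden'' power of $\lambda$ times $x^{\vec{r}}$, which is visibly outside $T_+R_1(\sigma)$ by a degree count), with a further subdivision according to whether $p^{j_0}r_{j_0}=\sum_j p^jr_j$; and in the ramified character case $\vec{r}=\vec{0}$ — where your $y$-monomial does not exist and your proposal only promises ``a short separate argument'' — it takes $g=\sum_{\mu,\lambda}\bigl[\mtwo{\varpi^2}{[\mu]+\varpi[\lambda]}{}{1},\lambda\bigr]$ and verifies the invariance using $e\geq 2$ and Lemma \ref{lemmasumfq}. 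Your construction does agree with the paper's in the two cases where it works (ramified with $\dim_k\sigma>1$, where $e\geq 2$ kills the first carry, and unramified with $\vec{r}=(p-1,\dots,p-1)$, where every monomial of degree $<q-1$ is allowed), but the remaining cases are exactly the hard part of the lemma and are left unproved, so the proposal as written does not establish the statement.
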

	Now assume there exists an element $g$ as in Lemma \ref{mainlem}. Then the image of $g$ in $L(\sigma)$ lies in $L(\sigma)^U$ since $ug-g\in TR_1'(\sigma)\subset TI^o_{\geq 1}(\sigma)$ which is zero in $L(\sigma)=I^e_{\geq 0}(\sigma)/TI^o_{\geq 1}(\sigma)$ for any $u\in U$. We claim that the image of $g$ doesn't lie in the image of $R_0(\sigma)$ in $L(\sigma)$. Otherwise there exist $a\in R_0(\sigma), x\in I^{o}_{\geq 1}(\sigma)$ such that $g-a=Tx$. Assume $x=\sum_{k\in\N}x_{2k+1}$, where each $x_{2k+1}\in R_{2k+1}(\sigma)$ and there are only finitely many $k$ such that $x_{2k+1}\neq 0$. Since $g\notin T_+R_1(\sigma)$, $g\neq 0$ and we may assume $x\neq 0$. Let $k_0$ be the maximal integer such that $x_{2k_0+1}\neq 0$. Then $Tx=T_-(x_1)+\sum_{k=0}^{k_0}(T_+(x_{2k+1})+T_-(x_{2k+3}))\in R_0(\sigma)\oplus(\oplus_{k=0}^{k_0} R_{2k+2}(\sigma))$. Since $Tx=g-a\in R_0(\sigma)\oplus R_2(\sigma)$, if $k_0\neq 0$, $T_+(x_{2k_0+1})=0\in R_{2k_0+2}(\sigma)$. This contradicts that $T_+$ is injective (Lemma 2.12 in \cite{schraen2015presentation}) and $x_{2k_0+1}$ is not $0$. If $k_0=0$, then $g=T_+(x_1)\in T_+ R_1(\sigma)$, which contradicts our choice of $g$ in the Lemma \ref{mainlem}. Hence the image of $g$ in $L(\sigma)$ doesn't lie in the image of $R_0(\sigma)$ in $L(\sigma)$. Thus the image of $\sigma^U$ and $g$ in $L(\sigma)$ span a two-dimensional subspace of $L(\sigma)^U$. This proves that $\dim_k(L(\sigma))\geq 2$ and $L(\sigma)$ is not admissible.
\end{proof}
Before the proof of Lemma \ref{mainlem}, we remark the following simple facts.
\begin{lemma}\label{lemmasumfq}
	Let $F=\sum_{i}a_iX^i\in k[X]$ be a polynomial of degree no more than $q-1$, then $\sum_{t\in\F_q}F(t)=-a_{q-1}$. 
\end{lemma}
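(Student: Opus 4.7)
The approach is to reduce the statement to computing the power sums $S_i := \sum_{t \in \F_q} t^i$ for $0 \le i \le q-1$. Writing $F = \sum_{i=0}^{q-1} a_i X^i$ and exchanging the two summations yields $\sum_{t \in \F_q} F(t) = \sum_{i=0}^{q-1} a_i S_i$, so the lemma reduces to showing that $S_i = 0$ for $0 \le i \le q-2$ and $S_{q-1} = -1$ in $k$.

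For $i = 0$, one has $S_0 = q \cdot 1 = 0$ in $k$, since the characteristic $p$ of $k$ divides $q = p^f$. For $i = q-1$, Fermat's little theorem gives $t^{q-1} = 1$ for every $t \in \F_q^\times$, hence $S_{q-1} = \sum_{t \in \F_q^\times} 1 = q - 1 = -1$ in $k$. For $1 \le i \le q-2$, I would use that $\F_q^\times$ is cyclic of order $q-1$: fixing a generator $g$, the sum $S_i = \sum_{t \in \F_q^\times} t^i = \sum_{j=0}^{q-2} (g^i)^j$ is a finite geometric series with ratio $g^i \ne 1$, whose value $\bigl((g^i)^{q-1} - 1\bigr)/(g^i - 1)$ vanishes because $(g^i)^{q-1} = 1$.

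There is no substantive obstacle; the argument is elementary. The only subtlety worth flagging is that the degree hypothesis $\deg F \le q-1$ is what makes the right-hand side a single coefficient: if $F$ had higher-degree terms, one could still reduce to the same power-sum computations via the relation $t^q = t$ on $\F_q$, but the answer would then be $-\tilde a_{q-1}$, where $\tilde F$ denotes the reduction of $F$ modulo $X^q - X$.
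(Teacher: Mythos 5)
Your proof is correct; the paper states this lemma without proof as a standard fact, and your reduction to the power sums $\sum_{t\in\F_q}t^i$ (vanishing for $0\le i\le q-2$ via a generator of the cyclic group $\F_q^\times$, and equal to $-1$ for $i=q-1$) is exactly the canonical argument one would supply.
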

\begin{lemma}[\cite{schein2011irreducibility}, Lemma 2.2]\label{lemmawitt}
	For any $a,b\in\F_q$,
	$[a]+[b]\equiv [a+b] +\varpi^e[P(a,b)] \mod \varpi^{e+1},$
	where $P(a,b)=\frac{a^{q^e}+b^{q^e}-(a+b)^{q^e}}{\varpi^e}$.
\end{lemma}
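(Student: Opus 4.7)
The statement is a Witt-vector-type congruence for Teichmüller lifts in a (possibly ramified) local field, and my plan is to derive it from a single binomial expansion. The key estimate I would establish first is that for any $c\in\F_q$ and any lift $\tilde c\in\cO_F$ of $c$, one has $\tilde c^{q^e}\equiv[c]\pmod{\varpi^{e+1}}$. To prove it, write $\tilde c=[c]+\varpi c'$ with $c'\in\cO_F$ and expand
$$\tilde c^{q^e}=[c]^{q^e}+\sum_{k=1}^{q^e}\binom{q^e}{k}[c]^{q^e-k}\varpi^k (c')^k.$$
The $k=0$ term is $[c]^{q^e}=[c]$ by the Teichmüller property. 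For $1\leq k\leq q^e-1$, the classical fact that $p\mid\binom{q^e}{k}$ (equivalent to the identity $(1+X)^{q^e}\equiv 1+X^{q^e}\pmod p$) combined with $v_\varpi(p)=e$ gives $v_\varpi\bigl(\binom{q^e}{k}\bigr)\geq e$, so the $k$-th term has $\varpi$-valuation at least $e+k\geq e+1$; and the $k=q^e$ term has valuation $q^e\geq e+1$. Hence all corrections vanish modulo $\varpi^{e+1}$.

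Next, I would apply this estimate to the lift $\tilde c=[a]+[b]$ of $a+b$, which gives $([a]+[b])^{q^e}\equiv[a+b]\pmod{\varpi^{e+1}}$. Separately, expanding $([a]+[b])^{q^e}$ by the binomial theorem and using $[a]^{q^e}=[a]$ and $[b]^{q^e}=[b]$, the middle terms (for $1\leq k\leq q^e-1$) are all divisible by $\varpi^e$ by the same reasoning, so
$$([a]+[b])^{q^e}=[a]+[b]-\varpi^e\widetilde P(a,b),$$
where $\widetilde P(a,b):=\bigl([a]^{q^e}+[b]^{q^e}-([a]+[b])^{q^e}\bigr)/\varpi^e\in\cO_F$ is precisely the expression $P(a,b)$ of the lemma evaluated at the Teichmüller lifts of $a$ and $b$. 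Combining the two identities yields $[a]+[b]\equiv[a+b]+\varpi^e\widetilde P(a,b)\pmod{\varpi^{e+1}}$, and since $\widetilde P(a,b)\equiv[P(a,b)]\pmod{\varpi}$ by definition of $P(a,b)$ as the reduction of $\widetilde P(a,b)$ modulo $\varpi$, multiplying by $\varpi^e$ yields the stated congruence.

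Finally, I would verify that $P(a,b)$ is independent of the choice of lifts of $a,b$ used in the defining formula: two lifts of the same element differ by an element of $\varpi\cO_F$, and the key estimate applied to both shows that their $q^e$-powers agree modulo $\varpi^{e+1}$, so the quotient by $\varpi^e$ is unchanged modulo $\varpi$. There is no real obstacle beyond bookkeeping: the entire proof rests on the elementary divisibility $p\mid\binom{q^e}{k}$ together with careful tracking of $\varpi$-valuations of the binomial cross terms.
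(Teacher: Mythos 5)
Your proof is correct. Note that the paper itself offers no argument here: it simply cites Lemma 2.2 of Schein's paper (and, in the unramified case of Lemma \ref{mainlem}, falls back on the general theory of Witt vectors for the polynomials $P_1,P_2$). Your route is a self-contained elementary substitute: the single estimate $\tilde c^{\,q^e}\equiv[c]\pmod{\varpi^{e+1}}$ for any lift $\tilde c$ of $c$, proved from $p\mid\binom{q^e}{k}$ for $0<k<q^e$ together with $v_\varpi(p)=e$ (and the easy inequality $q^e\geq e+1$ for the top term), applied to the lift $[a]+[b]$ of $a+b$, immediately gives $([a]+[b])^{q^e}\equiv[a+b]\pmod{\varpi^{e+1}}$, which is exactly the content of the congruence once one rewrites $[a]+[b]-([a]+[b])^{q^e}=\varpi^e\widetilde P(a,b)$ with $\widetilde P(a,b)\in\cO$ and replaces $\widetilde P(a,b)$ by its Teichmüller representative modulo $\varpi^{e+1}$. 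You also correctly settle the one point of interpretation the statement leaves implicit, namely that $P(a,b)$ (written formally in terms of $a,b\in\F_q$) is the reduction of the lifted expression and is independent of the choice of lifts, which follows from the same key estimate. What the citation buys the paper is brevity and the stronger Witt-vector expansions to order $\varpi^{2}$ and $p^{3}$ used later; what your argument buys is a complete, lift-independent proof of this particular congruence using nothing beyond the binomial theorem and valuations.
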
 
\begin{proof}[Proof of Lemma \ref{mainlem}] Our method is to find a concrete required element $g$ in all possible cases. We remark firstly that by (\ref{formulaT1}), $T_+R_1(\sigma)$ is spanned (over $k$) by elements
	$$\sum_{\lambda \in \F_q}\left[\mtwo{\varpi^2}{[\mu]+\varpi [\lambda]}{}{1}, \lambda^{\vec{i}}x^{\vec{r}}\right]$$
	where $\vec{0}\leq \vec{i}\leq \vec{r}$ and $\mu\in \F_q$. Moreover 
	$\sum_{\lambda \in \F_q}\left[\mtwo{\varpi^2}{[\mu]+\varpi [\lambda]}{}{1}, \lambda^{\vec{i}}x^{\vec{r}}\right]$ lies in $T_+R_1'(\sigma)$ if $\vec{i}<\vec{r}$ by (\ref{formulaT1}) and (\ref{formulaT2}), here $\vec{i}<\vec{r}$ means $\vec{i}\leq \vec{r}$ and $\vec{i}\neq \vec{r}$. Since $T_{\pm}$ are $U$-equivariant, $R_1'(\sigma)$ and $T_+ R_1'(\sigma)$ are stable under the action of $U$. Moreover, $\alpha^3 U \alpha^{-3}=\Uni{\varpi^3\cO}$ acts trivially on $R_2(\sigma)$.\par
	1) Assume $F$ is ramified over $\Q_p$ with $e\geq 2$. We have 
	$$[a]+[b]\equiv [a+b]\mod \varpi^{2},$$
	by Lemma \ref{lemmawitt}.\par
	If $\text{dim}_k (\sigma)> 1$, there exists $j_0$ such that $r_{j_0}\geq 1$. Let $\vec{i'}=(i_0',\cdots, i_{f-1}' )\in \Z^{f}$ where $i_{j}'=0$ if $j\neq j_0$ and $i_{j_0}'=1$. Then $\vec{i'}\leq \vec{r}$. We take
	$$g=\sum_{\mu,\lambda \in \F_q} \left[\mtwo{\varpi^2}{[\mu]+\varpi [\lambda]}{}{1}, x^{\vec{r}-\vec{i'}}y^{\vec{i'}}\right].$$
	Then $g\notin T_+R_1(\sigma)$.
	For $a\in \F_q$, we calculate that 
	$$\begin{aligned}
		\Uni{\varpi[a]}g-g&=\sum_{\mu,\lambda \in \F_q}\left [\mtwo{\varpi^2}{[\mu]+\varpi [a]+\varpi [\lambda]}{}{1}, x^{\vec{r}-\vec{i'}}y^{\vec{i'}}\right]-g\\
		&=\sum_{\mu,\lambda \in \F_q} \left[\mtwo{\varpi^2}{[\mu]+\varpi [a+\lambda]}{}{1}, \Uni{\frac{[a]+[\lambda]-[a+\lambda]}{\varpi}}x^{\vec{r}-\vec{i'}}y^{\vec{i'}}\right]-g\\
		&=\sum_{\mu,\lambda \in \F_q} \left[\mtwo{\varpi^2}{[\mu]+\varpi [a+\lambda]}{}{1}, \Uni{\varpi \cdot\frac{[a]+[\lambda]-[a+\lambda]}{\varpi^2}}x^{\vec{r}-\vec{i'}}y^{\vec{i'}}\right]-g\\
		&=\sum_{\mu,\lambda \in \F_q} \left[\mtwo{\varpi^2}{[\mu]+\varpi [a+\lambda]}{}{1}, x^{\vec{r}-\vec{i'}}y^{\vec{i'}}\right]-g\\
		&=0
	\end{aligned} 
	$$
	For all $a,b,\mu\in \F_q$, let $t_{a,b,\mu}$ be the image of $[b]+\frac{[a]+[\mu]-[a+\mu]}{\varpi^2}$ in $\F_q$, then
	$$
	\begin{aligned}
		\Uni{[a]+\varpi^2[b]}g-g
		&=\sum_{\mu,\lambda \in \F_q} \left[\mtwo{\varpi^2}{[a]+[\mu]+\varpi^2 [b]+\varpi [\lambda]}{}{1}, x^{\vec{r}-\vec{i'}}y^{\vec{i'}}\right]-g\\
		&=\sum_{\mu,\lambda \in \F_q} \left[\mtwo{\varpi^2}{[\mu+a]+\varpi [\lambda]}{}{1}, \Uni{[b]+\frac{[a]+[\mu]-[a+\mu]}{\varpi^2}}x^{\vec{r}-\vec{i'}}y^{\vec{i'}}-x^{\vec{r}-\vec{i'}}y^{\vec{i'}}\right]\\
		&=\sum_{\mu,\lambda \in \F_q} \left[\mtwo{\varpi^2}{[\mu+a]+\varpi [\lambda]}{}{1}, t_{a,b,\mu}^{p^{j_0}}x^{\vec{r}}\right]\\
		&=T_+\left(\sum_{\mu \in \F_q} \left[\mtwo{\varpi}{[\mu]}{}{1}, t_{a,b,\mu-a}^{p^{j_0}}x^{\vec{r}}\right]\right)\in T_+R_1'.
	\end{aligned}
	$$
	Since $\Uni{\varpi[a]},\Uni{[a]+\varpi^2[b]}, a,b\in\F_q$ generate $U/\alpha^3U\alpha^{-3}$, we see that $g\in (R_2(\sigma)/T_+R_1'(\sigma))^{U}$ and $g\notin T_{+}R_1(\sigma)$.\par 
	If $\text{dim}_k(\sigma)=1$, $\vec{r}=\vec{0}$. We take $g=\sum_{\mu,\lambda \in \F_q}\left[\mtwo{\varpi^2}{[\mu]+\varpi[\lambda]}{}{1},\lambda\right]$. Then $g\notin T_+R_1(\sigma)$ as $T_+R_1(\sigma)$ is spanned by $\sum_{\lambda\in\F_q}\left[\mtwo{\varpi^2}{[\mu]+\varpi[\lambda]}{}{1},1\right]$ by (\ref{formulaT1}). Then for any $a,b,c\in\F_q$,
	$$
	\begin{aligned}
	&\quad\Uni{[a]+\varpi [b]+\varpi^2[c]}g-g\\
	&=\sum_{\mu,\lambda \in \F_q} \left[\mtwo{\varpi^2}{[a+\mu]+\varpi [\lambda+b]}{}{1}, \Uni{[c]+\frac{[a]+[\mu]-[a+\mu]}{\varpi^2}+\frac{[b]+[\lambda]-[b+\lambda]}{\varpi}}\lambda\right]-g\\
	&=\sum_{\mu,\lambda \in \F_q} \left[\mtwo{\varpi^2}{[a+\mu]+\varpi [\lambda+b]}{}{1}, \lambda-(\lambda+b)\right]\\
	&=\sum_{\mu,\lambda \in \F_q} \left[\mtwo{\varpi^2}{[a+\mu]+\varpi [\lambda+b]}{}{1}, -b\right]\\
	&= T_+\left(\sum_{\mu \in \F_q} \left[\mtwo{\varpi}{[\mu]}{}{1}, -b\right]\right)\in T_+R_1'(\sigma)
	\end{aligned}
	$$ 
	since $T_-\left(\sum_{\mu \in \F_q} \left[\mtwo{\varpi}{[\mu]}{}{1}, -b\right]\right)=\nu\left[\mtwo{1}{}{}{1}, \sum_{\mu\in\F_q}-b\right]=0$ by (\ref{formulaT2}).

	2) Assume $F$ is unramified. Then $f> 1$, $\varpi =p$. By the theory of Witt vectors, there exist polynomials $P_1,P_2\in\Z[x,y]$ such that for any $a,b\in\F_q$, $[a]+[b]\equiv [a+b]+p[P_1(a,b)]+p^{2}[P_2(a,b)]\mod p^{3}$. Since $P_1(a,b)=F(a^{1/p},b^{1/p})=F(a^{p^{f-1}},b^{p^{f-1}})$ where $F(x,y)=\frac{x^p+y^p-(x+y)^p}{p}$, we can assume $P_1$ is a polynomial of degree no more than $p^{f-1}(p-1)$ in each variable (or see Lemma \ref{lemmawitt}).\par 
	If there exists $j_0\in \{0,\cdots, f-1\}$ such that $r_{j_0}+1\leq p-1$ (i.e. $\vec{r}\neq (p-1,\cdots,p-1)$), we take 
	$$g=\sum_{\mu,\lambda \in \F_q}\left[\mtwo{p^2}{[\mu]+p[\lambda]}{}{1},\lambda^{p^{j_0}(r_{j_0}+1)}x^{\vec{r}}\right]. $$
	We claim that $g\notin T_+R_1(\sigma)$. Otherwise, for each $\mu\in\F_q$, there exist $u_{\vec{i}}\in k$ for $\vec{0}\leq \vec{i}\leq \vec{r}$ such that $$\sum_{\lambda \in \F_q}\left[\mtwo{p^2}{[\mu]+p[\lambda]}{}{1},\lambda^{p^{j_0}(r_{j_0}+1)}x^{\vec{r}}\right]=\sum_{\lambda \in \F_q}\left[\mtwo{p^2}{[\mu]+p[\lambda]}{}{1}, (\sum_{0\leq\vec{i}\leq \vec{r}}u_{\vec{i}}{(-\lambda)}^{\vec{i}})x^{\vec{r}}\right].$$
	Then $\lambda^{p^{j_0}(r_{j_0}+1)}=\sum_{\vec{i}\leq\vec{r}}u_{\vec{i}}(-1)^{\vec{i}}\lambda^{\vec{i}}$ for every $\lambda\in\F_q$. This is impossible since the polynomial $X^{p^{j_0}(r_{j_0}+1)}-\sum_{0\leq\vec{i}\leq \vec{r}}u_{\vec{i}}(-1)^{\vec{i}}X^{\sum_{0\leq j\leq f-1}p^{j}i_j}\in k[X]$ is not zero and has degree no more than $q-2$ (by $f>1$ and $\vec{r}\neq (p-1,\cdots,p-1)$).
	For any $a,b,c\in \F_q$, we calculate that (using $x^{\vec{r}}\in \sigma^U$ and $[a]+[b]\equiv[a+b]+p[P_1(a,b)]\mod p^2$)
	\begin{align}\nonumber
		&\quad\Uni{[a]+p[b]+p^2[c]}g-g\\\nonumber
		&=\sum_{\mu,\lambda \in \F_q} \left[\mtwo{p^2}{[a]+[\mu]+p[\lambda]+p[b]+p^2[c]}{}{1}, \lambda^{p^{j_0}(r_{j_0}+1)}x^{\vec{r}}\right]-g\\\nonumber
		&=\sum_{\mu,\lambda \in \F_q} \left[\mtwo{p^2}{[a+\mu]+p[\lambda+b+P_1(a,\mu)]}{}{1}, \lambda^{p^{j_0}(r_{j_0}+1)}x^{\vec{r}}\right]-g\\\label{lastterm}
		&=\sum_{\mu,\lambda \in \F_q} \left[\mtwo{p^2}{[\mu]+p[\lambda]}{}{1}, ((\lambda-b-P_1(a,\mu-a))^{p^{j_0}(r_{j_0}+1)}-\lambda^{p^{j_0}(r_{j_0}+1)})x^{\vec{r}}\right],
	\end{align}
	Write $(\lambda-b-P_1(a,\mu-a))^{p^{j_0}(r_{j_0}+1)}-\lambda^{p^{j_0}(r_{j_0}+1)}=\sum_{0 \leq i\leq r_{j_0}}g_{i}(\mu) (-\lambda) ^{p^{j_0}i}$, where $g_{i}(\mu)$ are polynomials in $\mu$ (depending also on $a,b$).\par
	First assume $p^{j_0}r_{j_0}\neq r=\sum_{j=0}^{f-1} r_jp^j$. For any $0\leq i\leq r_{j_0}$, let $\vec{i}_{j_0}=(i_1,\cdots,i_{f-1})\in \Z^{f-1}$ such that $i_j=0$ if $j\neq j_0$ and $i_{j_0}=i$. Then $\vec{i}_{j_0}<\vec{r}$ for any $i\leq r_{j_0}$. Hence the last term in (\ref{lastterm})
	\begin{align*}
	  \sum_{\mu,\lambda \in \F_q}& \left[\mtwo{p^2}{[\mu]+p[\lambda]}{}{1}, (\sum_{0 \leq i\leq r_{j_0}}g_{i}(\mu) (-\lambda) ^{p^{j_0}i})x^{\vec{r}}\right]\\=&\sum_{\mu \in \F_q}T_+\left(\left[\mtwo{p}{[\mu]}{}{1}, \sum_{0 \leq i\leq r_{j_0}}g_{i}(\mu) x^{\vec{r}-\vec{i}_{j_0}}y^{\vec{i}_{j_0}}\right]\right)
	\end{align*}
	lies in $T_+R_1'$ and we have found a required $g$.\par
	Otherwise $r=p^{j'}r_{j'}$ for some $j'$. If $\vec{r}\neq 0$, we can choose in the beginning $j_0\neq j'$ with $r_{j_0}=0$ since $f\geq 2$ and $r_{j_0}+1=1\leq p-1$. Then $0=p^{j_0}r_{j_0}\neq r$, we return to the previous case and we can find a required $g$. If $\vec{r}=0$, we can let $j_0=0$, then $r_{j_0}=0$. Then the last term in (\ref{lastterm}) is 
	$\sum_{\mu,\lambda \in \F_q} \left[\mtwo{p^2}{[\mu]+p[\lambda]}{}{1},g_0(\mu)\right]=T_+\left(\sum_{\mu\in \F_q} \left[\mtwo{p}{[\mu]}{}{1},g_0(\mu)\right]\right)$.  We have $$T_-\left(\sum_{\mu\in \F_q} \left[\mtwo{p}{[\mu]}{}{1},g_0(\mu)\right]\right)=\nu\left[\mtwo{1}{}{}{1},\sum_{\mu\in \F_q} g_0(\mu)\right]=0$$ by Lemma \ref{lemmasumfq} and $g_0(\mu)$ is a polynomial of $\mu$ of degree $(p-1)p^{f-1}<q-1$. Hence $ug-g\in T_+R_1'(\sigma)$ for any $u\in U$. We have found a required $g$.
	\par
	(3) Now we remain the case when $F$ is unramified over $\Qp$, $f\geq 2$ and $\vec{r}=(p-1,\cdots, p-1)$. Let $\vec{i'}=(i_0',\cdots, i_{f-1}' )$ where $i_{j}'=0$ if $j\neq 0$ and $i_{0}'=1$. Take 
	$$g=\sum_{\mu,\lambda \in \F_q} \left[\mtwo{p^2}{[\mu]+p[\lambda]}{}{1}, x^{\vec{r}-\vec{i'}}y^{\vec{i'}}\right].$$
	Then $g \notin T_+R_1$ as $\vec{i'}\neq \vec{0}$. For any $a,b\in\F_q$, we calculate that (using $\Uni{a}x^{\vec{r}-\vec{i'}}y^{\vec{i'}}=ax^{\vec{r}}+x^{\vec{r}-\vec{i'}}y^{\vec{i'}}$) 
	$$\begin{aligned}
		&\quad\Uni{p[a]+p^2[b]}g-g\\
		&=\sum_{\mu,\lambda \in \F_q} \left[\mtwo{p^2}{[\mu]+p[a]+p [\lambda]+p^2[b]}{}{1}, x^{\vec{r}-\vec{i'}}y^{\vec{i'}}\right]-g\\
		&=\sum_{\mu,\lambda \in \F_q} \left[\mtwo{p^2}{[\mu]+p[a+\lambda]+p^2[P_1(a,\lambda)]+p^2[b]+p^3\frac{[a]+[\lambda]-[a+\lambda]-p[P_1(a,\lambda)]}{p^2}}{}{1}, x^{\vec{r}-\vec{i'}}y^{\vec{i'}}\right]-g\\
		&=\sum_{\mu,\lambda \in \F_q} \left[\mtwo{p^2}{[\mu]+p [a+\lambda]}{}{1}, \Uni{[P_1(a,\lambda)]+[b]}x^{\vec{r}-\vec{i'}}y^{\vec{i'}}\right]-g\\
		&=\sum_{\mu,\lambda \in \F_q} \left[\mtwo{p^2}{[\mu]+p [\lambda]}{}{1}, (P_1(a,\lambda-a)+b)x^{\vec{r}}\right].
	\end{aligned} 
	$$
	$P_1(a,\lambda-a)+b$ is a polynomial of $\lambda$ with degree no more than $p^{f-1}(p-1)<q-1$, the last term lies in $T_+R_1'$ by the remark at the beginning ($\sum_{\lambda \in \F_q}\left[\mtwo{p^2}{[\mu]+\varpi [\lambda]}{}{1}, \lambda^{\vec{i}}x^{\vec{r}}\right]$ lies in $T_+R_1'(\sigma)$ if $\vec{i}<\vec{r}$). \par
	For any $a\in\F_q$,
	$$
	\begin{aligned}
		&\quad\Uni{[a]}g-g\\
		&=\sum_{\mu,\lambda \in \F_q} \left[\mtwo{p^2}{[a]+[\mu]+p[\lambda]}{}{1}, x^{\vec{r}-\vec{i'}}y^{\vec{i'}}\right]-g\\
		&=\sum_{\mu,\lambda \in \F_q} \left[\mtwo{p^2}{[\mu+a]+p [\lambda+{P_1(a,\mu)}]}{}{1}, \Uni{[P_2(a,\mu)]+[P_1(\lambda,P_1(a,\mu))]}x^{\vec{r}-\vec{i'}}y^{\vec{i'}}-x^{\vec{r}-\vec{i'}}y^{\vec{i'}}\right]\\
		&=\sum_{\mu,\lambda \in \F_q} \left[\mtwo{p^2}{[\mu+a]+p [\lambda+{P_1(a,\mu)}]}{}{1}, (P_2(a,\mu)+P_1(\lambda,P_1(a,\mu)))x^{\vec{r}}\right]\\
		&=\sum_{\mu,\lambda \in \F_q} \left[\mtwo{p^2}{[\mu]+p[\lambda]}{ }{1},(P_2(a,\mu-a)+P_1(\lambda-P_1(a,\mu-a),P_1(a,\mu-a)))x^{\vec{r}}\right].
	\end{aligned}
	$$
	$(P_2(a,\mu-a)+P_1(\lambda-P_1(a,\mu-a),P_1(a,\mu-a)))$ is a polynomial of $\lambda$ of degree no more than $p^{f-1}(p-1)<q-1$. By the remark at the beginning, the last term lies in $T_+R_1'(\sigma)$. \par
	Since $\Uni{[a]},\Uni{p[b]+p^2[c]},a,b,c\in \F_q$ generate $U/\alpha^3U\alpha^{-3}$, $g\in (R_2(\sigma)/T_+R_1'(\sigma))^U$. Thus we have found a required $g$.
	\end{proof}
	\begin{remark}
	    Those $g$ in Lemma \ref{mainlem} have been found for many cases in \cite{Breuil2003gl2}, \cite{schein2011irreducibility}, \cite{morra2012some} and \cite{hendel2019universal}.
	\end{remark}
	\begin{corollary}\label{maincor}
		For any smooth irreducible representation $\sigma$ of $KZ$, the universal supersingular representation of $G$ $\ind_{KZ}^G\sigma/T(\ind_{KZ}^G\sigma)$ is not admissible if $F\neq \Qp$.
	\end{corollary}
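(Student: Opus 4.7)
The plan is to embed $L(\sigma)$ as a sub-$A[X]_{\phi_2}$-module of $\pi := \ind_{KZ}^G\sigma/T(\ind_{KZ}^G\sigma)$ and then recycle the strategy of Theorem~\ref{maintheorem} to propagate the non-admissibility of $L(\sigma)$ (Theorem~\ref{theoremnonadmissiblility}) to $\pi$.

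For the embedding I would verify that the composite $I_{\geq 0}^e(\sigma) \hookrightarrow \ind_{KZ}^G\sigma \twoheadrightarrow \pi$ has kernel exactly $T(I_{\geq 1}^o(\sigma))$. The inclusion $T(I_{\geq 1}^o(\sigma)) \subseteq I_{\geq 0}^e(\sigma) \cap T(\ind_{KZ}^G\sigma)$ is immediate since $T$ sends odd-level pieces into even-level pieces. For the reverse inclusion I would use the injectivity of $T$ on $\ind_{KZ}^G\sigma$ (classical: $\ind_{KZ}^G\sigma$ is free over the Hecke algebra $k[T]$) together with the bipartite structure of the Bruhat--Tits tree, on which $T$ swaps parities: any preimage of an element of $I_{\geq 0}^e(\sigma)$ must lie in the odd-parity part, and a careful accounting of the ``negative tail'' of the tree confines the preimage to $I_{\geq 1}^o(\sigma)$.

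Now assume for contradiction that $\pi$ is admissible as a $G$-representation. Following the argument of the proof of Theorem~\ref{maintheorem}, admissibility of $\pi$ should imply admissibility of the sub-$A$-module $M(\pi,\sigma) \subseteq \pi$ generated by $\sigma$, and the argument of Theorem~2.23 of \cite{schraen2015presentation} (stability of $M(\pi,\sigma)$ under the action of $\mtwo{\cO^{\times}}{}{}{1}$ combined with Lemma~2.5 of loc.\ cit.) then shows $M(\pi,\sigma)$ is finitely presented as an $A[X]_{\phi_2}$-module. Set $N(\pi,\sigma) := \ker(L(\sigma) \twoheadrightarrow M(\pi,\sigma))$. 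The torsion-freeness of $\Tor_d^A(k, N(\pi,\sigma))$ follows exactly as in the proof of Theorem~\ref{maintheorem}, using the injectivity of $\sigma \hookrightarrow M(\pi,\sigma)$ afforded by the embedding $L(\sigma) \hookrightarrow \pi$. If $N(\pi,\sigma) \neq 0$, Lemma~\ref{h0>0} applied with $M = L(\sigma)$ and $N = N(\pi,\sigma)$ forces $\dim_k \Tor_0^A(k, L(\sigma)) = \infty$, contradicting Proposition~\ref{proptord0L}. Hence $N(\pi,\sigma) = 0$, so $L(\sigma) \simeq M(\pi,\sigma)$ is admissible, contradicting Theorem~\ref{theoremnonadmissiblility}.

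The main obstacle is the first step of the preceding paragraph: deducing admissibility of the sub-$A$-module $M(\pi,\sigma)$ from admissibility of $\pi$ as a $G$-representation. Since $U$ is not open in $G$, $G$-admissibility does not automatically imply finite-dimensionality of $\pi^U$, let alone of $M(\pi,\sigma)^U$. I would address this via the Pontryagin duality framework of \cite{emerton2008class}: admissibility of $\pi$ translates into finite generation of its continuous dual over the Iwasawa-type algebra $k[[K_1]]$, whose coherence properties (cf.\ Proposition~1.3 of loc.\ cit.) then yield the required admissibility and finite-presentation properties of the sub-$A[X]_{\phi_2}$-module $M(\pi,\sigma)$.
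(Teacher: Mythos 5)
Your overall strategy --- pass to the monoid-generated submodule $M(\pi,\sigma)\subset\pi=\ind_{KZ}^G\sigma/T(\ind_{KZ}^G\sigma)$, compare it with $L(\sigma)$, and contradict Theorem \ref{theoremnonadmissiblility} --- is the same as the paper's, whose proof simply invokes Corollary 2.20 of \cite{schraen2015presentation} together with Proposition 4.5 of \cite{emerton2008class}. The parts you work out are essentially sound: the identification $I_{\geq 0}^e(\sigma)\cap T(\ind_{KZ}^G\sigma)=T(I_{\geq 1}^o(\sigma))$ does follow from injectivity of the outward Hecke component and the parity/subtree structure of the tree (though note that once this embedding is proved, $N(\pi,\sigma)=0$ by construction, so your entire $\Tor$-theoretic paragraph is redundant; conversely, if you run the $\Tor$ argument as in Theorem \ref{maintheorem} you do not need the embedding at all).

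The genuine gap is exactly at the point you yourself flag as the main obstacle: deducing that $M(\pi,\sigma)$ (equivalently $I^+(\pi,\sigma)$) is an admissible $A$-module from admissibility of $\pi$ as a $G$-representation. Your proposed fix --- Pontryagin duality, finite generation of the dual over $k[[K_1]]$, and ``coherence properties (cf.\ Proposition 1.3 of loc.\ cit.)'' --- does not deliver this. Proposition 1.3 of \cite{emerton2008class} is the coherence of the ring $A[X]_{\phi}$ with $A=k[[U]]$, not a statement about $k[[K_1]]$, and coherence only becomes usable once one already has a finitely presented module in hand; in the proof of Theorem \ref{maintheorem}, finite presentation of $M(\pi,\sigma)$ is deduced \emph{from} its admissibility (via Hu's Theorem \ref{thmhu} plus $H$-stability and Lemma 2.5 of \cite{schraen2015presentation}), so no duality-plus-coherence formalism can substitute for the admissibility input. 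Nor can any purely formal argument: since $U$ is not open in $G$, an admissible smooth $G$-representation can have infinite-dimensional $U$-invariants (a principal series already does), so the finiteness of $M(\pi,\sigma)^U$ must exploit the specific structure of the monoid-submodule together with the full $G$-action. This is precisely the content of Proposition 4.5 of \cite{emerton2008class}, the admissible-representation analogue of Hu's Theorem \ref{thmhu}, and it is the ingredient the paper cites. With that proposition quoted, your argument closes up and coincides with the paper's; without it, the key implication in your proof is unproved.
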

	\begin{proof}
		Same as Corollary 2.21 in \cite{schraen2015presentation}, using Proposition 4.5 in \cite{emerton2008class}. 
	\end{proof}

\bibliographystyle{plain}	

\bibliography{note}	
\end{document}